 \newcommand{\arxiv}[1]{\href{http://arxiv.org/abs/#1}{\texttt{arXiv:#1}}}
\theoremstyle{plain}
\newtheorem{theorem}{Theorem}[section]
\newtheorem{lemma}[theorem]{Lemma}
\title{\bf Generalizing the divisibility property\\ of rectangle domino tilings}
\author{Forest Tong\\
\small Department of Mathematics\\[-0.8ex]
\small Massachusetts Institute of Technology\\[-0.8ex]
\small Cambridge, MA, USA \\
\small\tt forest1218us@gmail.com
}
\date{\dateline{Jul 8, 2014}{Jun 28, 2016}{Jul 8, 2016} \\
\small Mathematics Subject Classifications: 05C70, 05C50, 05C30, 05C22}
\DeclareMathOperator{\s}{sgn}
\DeclareMathOperator{\w}{wt}
\newcommand{\swh}{\overline{H}}
\newcommand{\G}{\mathcal G}
\newcommand{\C}{\mathcal C}
\newcommand{\Rd}{R^{\prime\prime}}
\newcommand{\K}[1]{K^{(#1)}}
\begin{document}

\maketitle

\begin{abstract}
We introduce a class of graphs called \emph{compound graphs}, generalizing rectangles, which are constructed out of copies of a planar bipartite \emph{base graph}. The main result is that the number of perfect matchings of every compound graph is divisible by the number of matchings of its base graph. Our approach is to use Kasteleyn's theorem to prove a key lemma, from which the divisibility theorem follows combinatorially. This theorem is then applied to provide a proof of Problem 21 of Propp's \emph{Enumeration of Matchings}, a divisibility property of rectangles. Finally, we present a new proof, in the same spirit, of Ciucu's factorization theorem.
\end{abstract}

\begin{section}{Introduction}

The number of perfect matchings of a planar graph (or equivalently, the number of domino tilings of its planar dual, as shown in Figure~\ref{fig:planar}) has been the subject of much study, and many striking numerical relations have been observed among the matchings of related graphs.
One such relation, which provides the motivation for this paper, is the following problem posed by Jim Propp \cite{propp99}. Let $R(m, n)$ denote the $m \times n$ rectangular graph, or, more formally, $P_m \times P_n$, where $P_k$ is the path graph with $k$ vertices. Let $\#G$ denote the number of perfect matchings (which we will refer to simply as ``matchings") of a graph $G$. The divisibility property referred to in the title is that if $a, b, A, B$ are positive integers such that $a + 1 \mid A + 1$ and $b + 1 \mid B + 1$, where $\mid$ represents integer division, then $\#R(a, b) \mid \#R(A, B).$

\begin{figure}[tb!]
\centering
\begin{subfigure}{0.3\textwidth}
\begin{tikzpicture}[scale=.7]
\draw[fill=gray] (0, 0) grid (4, 3);

\foreach \p in {(0, 0), (2, 0), (1, 1), (1, 2), (0, 3), (2, 3)}
\draw[ultra thick] \p -- +(1, 0);

\foreach \p in {(0, 1), (3, 1), (4, 0), (4, 2)}
\draw[ultra thick] \p -- +(0, 1);

\foreach \x in {0,...,4}
\foreach \y in {0,...,3}
\draw[fill=black] (\x, \y) circle (2pt);

\end{tikzpicture}
\end{subfigure}
\begin{subfigure}{0.3\textwidth}
\begin{tikzpicture}[scale=.7]

\draw (0, 0) -- (5, 0);
\draw (0, 1) -- (4, 1);
\draw (1, 2) -- (3, 2);
\draw (4, 2) -- (5, 2);
\draw(0, 3) -- (4, 3);
\draw(0, 4) -- (5, 4);

\draw (0, 0) -- (0, 4);
\draw (0, 1) -- (0, 3);
\draw (1, 1) -- (1, 3);
\draw (2, 0) -- (2, 1);
\draw (2, 3) -- (2, 4);
\draw (3, 1) -- (3, 3);
\draw (4, 0) -- (4, 4);
\draw (5, 0) -- (5, 4);

\end{tikzpicture}
\end{subfigure}
\caption{\label{fig:planar}A sample matching of $R(4, 5)$ and corresponding domino tiling of its planar dual.}
\end{figure}
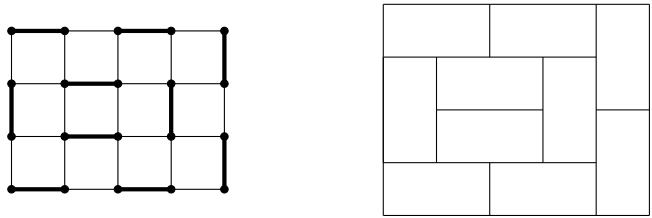

Propp notes that this divisibility property can be proved quite easily from Kasteleyn's original formula expressing the number of matchings of an $m \times n$ rectangle as
$$\#R(m, n) = \displaystyle\prod_{j = 1}^m \prod_{k = 1}^n \left(4 \cos^2 \frac{\pi j}{m + 1} + 4\cos^2 \frac{\pi k}{n + 1}\right)^{1/4}$$
and a dose of abstract algebra. This proof, however, is unsatisfying because it provides no combinatorial insight. A different approach is to express the number of matchings, for fixed $a$, as a linearly recurrent sequence; proving that this particular sequence is a divisibility sequence is possible algebraically for small values of $a$. For example, the sequence $R(2, n)$ is known to be the Fibonnaci sequence shifted by one, which indeed satisfies the divisibility property. This type of proof, too, leads to no combinatorial insight. Both approaches are also very specifically tailored to the case of rectangular regions.

One question is then whether there exists a combinatorial proof of this divisibility property. In this paper, we prove a generalized divisibility theorem, introducing a class of graphs called \emph{compound graphs}. When specialized to rectangles, this theorem provides a proof of the divisibility property of rectangle domino tilings which, despite making use of linear-algebraic techniques, provides combinatorial insight. The key combinatorial lemma of the proof is the ``zero-sum lemma." The zero-sum lemma states that the sum of the signed number of matchings over a family of closely related compound graphs is zero. 

In fact, this zero-sum lemma is related to a couple of other results in the domino tiling literature, such as Ciucu's factorization theorem \cite{ciucu97}. Ciucu's factorization theorem provides a neat proof of the case $\#R(a,b) \mid \#R(a, 2b + 1),$ and thus our divisibility result may be viewed as generalizing Ciucu's theorem. The similarity between these two theorems is not a coincidence; in the last section of the paper, the same linear-algebraic techniques used to prove the zero-sum lemma are applied to provide an alternative proof of Ciucu's lemma when the graph is bipartite. Another related result is Kuo's graphical condensation theorem, or more precisely, his series of graphical condensation relations presented in Section 2 of \cite{kuo03}. Yan and Zhang show that Kuo's graphical condensation relations can be deduced as a special case of Ciucu's lemma \cite{yan05}. 

\begin{figure}[htb]
\centering
\begin{tikzpicture}[scale=.55]
\begin{scope}
\clip[draw] (0, 0) -- (0, 1) -- (1, 1) -- (2, 1) -- (3, 1) -- (3, 2) -- (4, 2) -- (5, 2) -- (6, 2) -- (6, 3) -- (7, 3) -- (8, 3) -- (9, 3) -- (9, 4) -- (10, 4) -- (11, 4) -- (12, 4) -- (12, 5) -- (13, 5) -- (13, 4) -- (14, 4) -- (14, 3) -- (15, 3) -- (15, 2) -- (16, 2) -- (16, 1) -- (17, 1) -- (17, 0) -- (16, 0) -- (15, 0) -- (14, 0) -- (14, -1) -- (13, -1) -- (12, -1) -- (11, -1) -- (11, -2) -- (10, -2) -- (9, -2) -- (8, -2) -- (8, -3) -- (7, -3) -- (6, -3) -- (5, -3) -- (5, -4) -- (4, -4) -- (4, -3) -- (3, -3) -- (3, -2) -- (2, -2) -- (2, -1) -- (1, -1) -- (1, 0) -- cycle;
\draw[fill=gray] (0, -4) grid (17, 5);

\foreach \x in {0,...,17}
\foreach \y in {-4,...,5}
\draw[fill=black] (\x, \y) circle (2pt);

\foreach \x in {0,2,...,16}
\foreach \y in {-4,-2,...,4}
\draw[fill=white] (\x, \y) circle (2pt);

\foreach\x in {1,3,...,17}
\foreach \y in {-3,-1,...,5}
\draw[fill=white] (\x, \y) circle (2pt);

\end{scope}

\foreach \p in {(0, 0), (1, 1), (3, 1), (4, 2), (6, 2), (7, 3), (9, 3), (10, 4), (12, 4), (13, 5), (14, 4), (15, 3), (16, 2), (17, 1), (16, 0), (14, 0), (13, -1), (11, -1), (10, -2), (8, -2), (7, -3), (5, -3), (4, -4), (3, -3), (2, -2), (1, -1)}
\draw[fill=white] \p circle (2pt);

\foreach \p in {(0, 1), (2, 1), (3, 2), (5, 2), (6, 3), (8, 3), (9, 4), (11, 4), (12, 5), (13, 4), (14, 3), (15, 2), (16, 1), (17, 0), (15, 0), (14, -1), (12, -1), (11, -2), (9, -2), (8, -3), (6, -3), (5, -4), (4, -3), (3, -2), (2, -1), (1, 0)}
\draw[fill=black] \p circle (2pt);

\end{tikzpicture}
\caption{\label{fig:aztec}The Aztec pillow graph of order 9}
\end{figure}
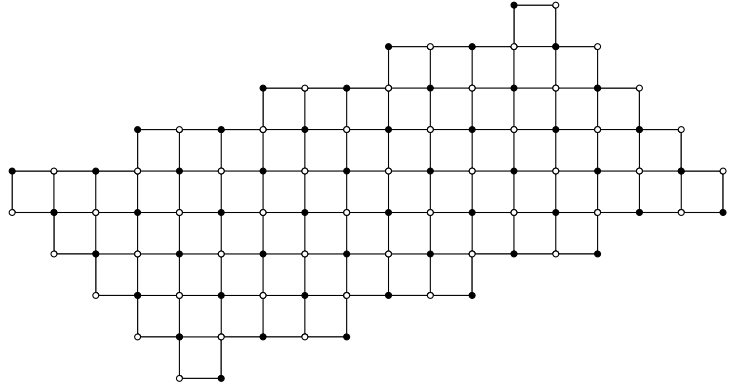

An open problem which we propose to the reader is that of the seemingly analogous divisibility property of Aztec pillows, for which no proof, combinatorial or otherwise, is known to the author. An Aztec pillow is pictured in Figure~\ref{fig:aztec} and was originally presented in \cite{propp99}. If we let $AP_n$ denote the Aztec 3-pillow of width $2n$ (either 0 or 2 mod 4 as the case may be), then it appears that $\#AP_m \mid \#AP_n$ whenever $m + 3 \mid n + 3$. The author has verified this fact computationally for $m,n < 77$. In fact, when $AP_n$ is decomposed into a linearly recursive part and a square part according to Propp's conjecture, \emph{both} parts seem to be divisibility sequences. Strangely enough, neither 5-pillows, 7-pillows, nor 9-pillows (as defined in \cite{hanusa05}) seem to share this divisibility property, although Aztec diamonds do trivially. This mysterious Aztec pillow divisibility, if true, may be of a fundamentally different character from the rectangle divisibility which is the subject of this paper.

\end{section}

\begin{section}{Definition of Compound Graphs}

We provide a constructive definition of compound graphs. Begin by choosing two planar bipartite graphs, the \emph{base graph} and the \emph{supergraph}, and assign to their vertices a bipartite coloring of black and white. We additionally require that the base graph have an equal number of black and white vertices. Place a copy of the base graph, retaining the original coloring, at each vertex of the supergraph. The vertices of the supergraph are thus abstract and represent copies of the base graph. The planar embedding chosen will become important later, so we specify that for each black vertex of the supergraph, the original embedding of the base graph is retained for that copy, while for each white vertex of the supergraph, the embedding is reversed through reflection. In addition, the copies of the base graph must all be placed \emph{outside} of each other, so that all the original faces (except the outer face) are preserved. However, when we refer to the ``outer face" of a copy of the base graph, we will mean the set of edges forming its original outer face (which may no longer be a face after the addition of more edges). An example compound graph is shown in Figure~\ref{fig:ex}.

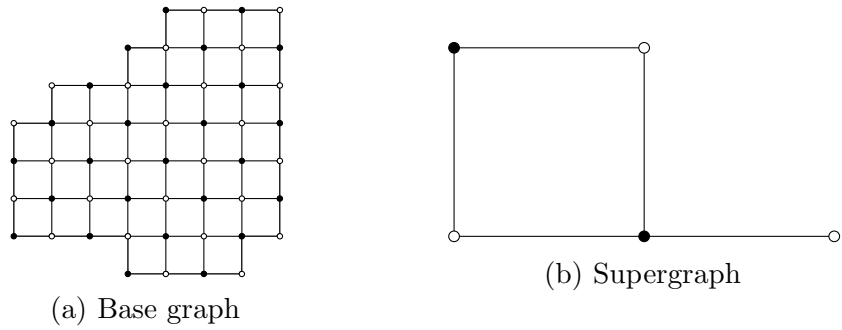
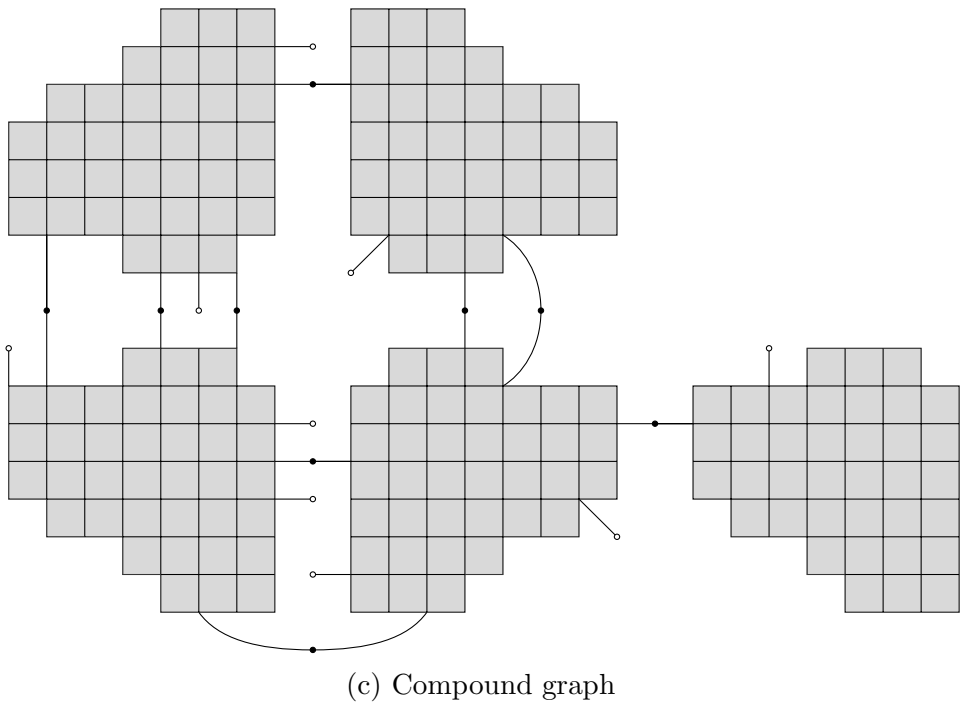
\begin{figure}[ht!]
\centering
\begin{subfigure}{0.4\textwidth}
\centering
\begin{tikzpicture}[scale=.5]

\begin{scope}

\clip[draw] (-2, 1) -- (-2, 2) -- (-1, 2) -- (-1, 3) -- (-1, 4) -- (-1, 5) -- (-1, 6) -- (-1, 7) -- (-1, 8) -- (-2, 8) -- (-3, 8) -- (-4, 8) -- (-4, 7) -- (-5, 7) -- (-5, 6) -- (-6, 6) -- (-7, 6) -- (-7, 5) -- (-8, 5) -- (-8, 4) -- (-8, 3) -- (-8, 2) -- (-7, 2) -- (-6, 2) -- (-5, 2) -- (-5, 1) -- (-4, 1) -- (-3, 1) -- cycle;
\draw (-10, 0) grid (0, 10);
\foreach \x in {0,...,10}
\foreach \y in {0,...,10}
{
	\pgfmathsetmacro{\z}{\x+\y};
	\ifthenelse{\isodd{\z}}
	{\draw[fill=white, xscale=-1] (\x, \y) circle (2pt);}
	{\draw[fill=black, xscale=-1] (\x, \y) circle (2pt);}
}
\end{scope}

\foreach \i in {(2, 1), (1, 2), (1, 4), (1, 6), (1, 8), (3, 8), (4, 7), (5, 6), (7, 6), (8, 5), (8, 3), (7, 2), (5, 2), (4, 1)}
\draw[fill=white, xscale=-1] \i circle (2pt);
\foreach \i in {(2, 2), (1, 3), (1, 5), (1, 7), (2, 8), (4, 8), (5, 7), (6, 6), (7, 5), (8, 4), (8, 2), (6, 2), (5, 1), (3, 1)}
\draw[fill=black, xscale=-1] \i circle (2pt);

\end{tikzpicture}
\caption{Base graph}
\end{subfigure}
\vspace{.5in}
\begin{subfigure}{0.4\textwidth}
\centering
\begin{tikzpicture}

\def\k{2.5};

\draw (0, 0) -- (0, \k) -- (\k, \k) -- (\k, 0) -- cycle;
\draw (\k, 0) -- (\k+\k, 0);

\foreach \i in {(0, \k), (\k, 0)}
\draw[fill=black] \i circle (2pt);

\foreach \i in {(0, 0), (\k, \k), (\k+\k, 0)}
\draw[fill=white] \i circle (2pt);

\end{tikzpicture}
\caption{Supergraph}
\end{subfigure}

\begin{subfigure}{1.0\textwidth}
\centering
\begin{tikzpicture}[scale=.5]

\def\white{(2, 1), (1, 2), (1, 4), (1, 6), (1, 8), (3, 8), (4, 7), (5, 6), (7, 6), (8, 5), (8, 3), (7, 2), (5, 2), (4, 1)};
\def\black{(2, 2), (1, 3), (1, 5), (1, 7), (2, 8), (4, 8), (5, 7), (6, 6), (7, 5), (8, 4), (8, 2), (6, 2), (5, 1), (3, 1)};

\def\x{(2, 1) -- (2, 2) -- (1, 2) -- (1, 3) -- (1, 4) -- (1, 5) -- (1, 6) -- (1, 7) -- (1, 8) -- (2, 8) -- (3, 8) -- (4, 8) -- (4, 7) -- (5, 7) -- (5, 6) -- (6, 6) -- (7, 6) -- (7, 5) -- (8, 5) -- (8, 4) -- (8, 3) -- (8, 2) -- (7, 2) -- (6, 2) -- (5, 2) -- (5, 1) -- (4, 1) -- cycle};

\def\a{15};

\begin{scope}
\clip[draw] \x;
\draw[fill=black!\a] (0, 0) rectangle (10, 10);
\draw (0, 0) grid (10, 10);
\end{scope}

\begin{scope}
\clip[draw, xscale = -1] \x;
\draw[fill=black!\a, xscale=-1] (0, 0) rectangle (10, 10);
\draw[xscale = -1] (0, 0) grid (10, 10);
\end{scope}

\begin{scope}
\clip[draw, yscale=-1] \x;
\draw[fill=black!\a, yscale=-1] (0, 0) rectangle (10, 10);
\draw[yscale = -1] (0, 0) grid (10, 10);
\end{scope}

\begin{scope}
\clip[draw, xscale = -1, yscale = -1] \x;
\draw[fill=black!\a, xscale=-1, yscale=-1] (0, 0) rectangle (10, 10);
\draw[xscale = -1, yscale = -1] (0, 0) grid (10, 10);
\end{scope}

\begin{scope}
\clip[draw, xscale = -1, yscale = -1, shift = {(-18, 0)}] \x;
\draw[fill=black!\a, xscale=-1, yscale=-1, shift = {(-18, 0)}] (0, 0) rectangle (10, 10);
\draw[draw, xscale = -1, yscale = -1, shift = {(-18, 0)}] (0, 0) grid (10, 10);
\end{scope}

\coordinate (s0) at (-2, 0);
\coordinate (s1) at (-4, 0);
\coordinate (s2) at (4, 0);
\coordinate (s3) at (6, 0);
\coordinate (s4) at (-7, 0);
\coordinate (s5) at (0, -4);
\coordinate (s6) at (0, -9);
\coordinate (s7) at (0, 6);
\coordinate (s8) at (9, -3);

\foreach \i in {0,1,2}
{
	\draw (s\i) -- +(0, 1);
	\draw (s\i) -- +(0, -1);
	\draw[fill=black] (s\i) circle (2pt);
}

\draw (s3) .. controls (6, 1) and (5.5, 1.7) .. (5, 2);
\draw[yscale = -1] (s3) .. controls (6, 1) and (5.5, 1.7) .. (5, 2);
\draw[fill=black] (s3) circle (2pt);

\draw (s4) -- +(0, 2) -- +(0, -2);
\draw[fill=black] (s4) circle (2pt);

\draw (s5) -- +(1, 0) -- +(-1, 0);
\draw[fill=black] (s5) circle (2pt);

\draw (s6) .. controls (1.5, -9) and (2.5, -8.7) .. (3, -8);
\draw[xscale=-1] (s6) .. controls (1.5, -9) and (2.5, -8.7) .. (3, -8);
\draw[fill=black] (s6) circle (2pt);

\draw (s7) -- +(1, 0) -- +(-1, 0);
\draw[fill=black] (s7) circle (2pt);

\draw (s8) -- +(1, 0) -- +(-1, 0);
\draw[fill=black] (s8) circle (2pt);

\draw[xscale=-1] (-1, 1) -- (-2, 2);
\draw[fill=white, xscale=-1] (-1, 1) circle (2pt);

\draw (-1, -3) -- (0, -3);
\draw[fill=white] (0, -3) circle (2pt);

\draw (-1, -5) -- (0, -5);
\draw[fill=white] (0, -5) circle (2pt);

\draw (0, -7) -- (1, -7);
\draw[fill=white] (0, -7) circle (2pt);

\draw (-3, 0) -- (-3, 1);
\draw[fill=white] (-3, 0) circle (2pt);

\draw (8, -6) -- (7, -5);
\draw[fill=white] (8, -6) circle (2pt);

\draw (0, 7) -- (-1, 7);
\draw[fill=white] (0, 7) circle (2pt);

\draw (12, -1) -- (12, -2);
\draw[fill=white] (12, -1) circle (2pt);

\draw (-8, -1) -- (-8, -2);
\draw[fill=white] (-8, -1) circle (2pt);

\end{tikzpicture}
\caption{Compound graph}
\end{subfigure}

\caption{\label{fig:ex}An example compound graph with corresponding base graph and supergraph pictured. Although the base graph shown here is a subgraph of the square lattice, this need not be the case.}
\end{figure}

The vertices belonging to the copies of the base graph are called \emph{base vertices}, the edges \emph{base edges}, and the faces \emph{base faces}. Define an equivalence relation $\cong$ on the base vertices by saying $u \cong v$ if and only if $u$ and $v$ correspond to the same vertex of the base graph. In this sense, each vertex of the original base graph may be thought of as an equivalence class. Analogously, let us say that two bases edges are equivalent if and only if they correspond to the same edge of the base graph. Because no two vertices on different copies of the base graph are or will be adjacent, two edges are equivalent if and only if their respective endpoints are equivalent. 

Next, a number of vertices and edges shall be added to the graph in a certain way, subject to the condition that the resulting graph must remain planar. For each edge of the supergraph, whose endpoints correspond to two copies of the base graph, say, $G_1$ and $G_2$, we can place one or more vertices which we call \emph{stems}. Each stem is then connected to exactly one vertex on the outer face of each of $G_1$ and $G_2$, and the two vertices must be equivalent. We will furthermore specify that all the stems are colored black, so that they must connect to white vertices. 

Suppose $p$ stems have been added in this way. Then, the final modification is to place $p$ vertices referred to as \emph{leaves}, which have degree 1 and connect to any of the black vertices lying on the outer face of a copy of the base graph. The leaves will all be white. (When we refer to leaves in the following discussion, we refer to these leaves, not to any base vertices which happen to have degree 1.) 

Any graph thus obtained by this procedure involving a base graph, supergraph, and stems and leaves is called a compound graph. The fact that the stems are black and the leaves are white is important only for the proof of the zero-sum lemma; in the last section, we will remove that constraint, but require that the supergraph have only two vertices.


The subsequent results will be stated for weighted graphs (and may be specialized to the case of unweighted graphs by setting all the weights equal to 1). The weight of a matching is defined to be the product of the weights of all the edges in the matching. The ``number of matchings" of a weighted graph $G$, written symbolically as $\#G$, refers to the sum of the weights of all matchings of $G$. 

Given a weighted base graph, the weights of a compound graph are assigned as follows. Each base edge of the compound graph is assigned the weight of the edge in $E$ to which it corresponds. The weights of the edges attached to stems or leaves may be assigned arbitrarily, with the condition that any two edges attached to the same stem are given the same weight. The effect of these weights is merely to multiply the number of matchings by a factor. For simplicity, we will assume that each of these edges has a weight of 1.

The weights may lie in any integral domain $R$. Note that, although setting $R$ to be the integers is perhaps the most natural choice, $R$ can be much more general. For example, we can let each base edge define a different variable as its weight, and let $R$ be the ring of integer polynomials in these variables.

The main result, which will be proved in Section ~\ref{sec:div}, is that the weighted number of matchings of a compound graph is divisible by the weighted number of matchings of its base graph.

\end{section}

\begin{section}{Kasteleyn's Theorem and Sign Functions}

Kasteleyn's theorem expresses the number of matchings of a graph as the determinant of a certain matrix, with the aid of a \emph{sign function} defined on the edges. This technique has been described in various forms in the literature; for example, Kasteleyn originally presents the technique in terms of choosing a ``Pfaffian orientation" for a planar graph which need not be bipartite \cite{kasteleyn67}. For the reader's convenience, we restate Kasteleyn's theorem in the bipartite form used by Percus \cite{percus69} and present the necessary definitions and propositions involving sign functions. The first two facts, Lemma~\ref{lem:faces+} and Lemma~\ref{lem:sgn exists}, are known and draw from Kasteleyn's original papers \cite{kasteleyn61, kasteleyn67} and Kenyon's expository notes \cite{kenyon09}. At the end of this section, we present a new lemma regarding sign functions on compound graphs.

Suppose we have a planar bipartite graph with edge set $E$. A sign function is a function $\s\colon E \rightarrow \{1, -1\}$ such that if $\mathcal C$ is a simple cycle of length $2\ell$, strictly enclosing an even number of points, then 
$$\displaystyle\prod_{e \in \C} \s e = (-1)^{\ell - 1}.$$

Unfortunately, sign functions are not independent of which planar embedding of a graph is chosen. Therefore, when we discuss sign functions on compound graphs, we will have to specify a planar embedding, up to homeomorphism. We have already described the embedding of the copies of the base graph. In the subsequent discussion, let us specify that the leaves shall always be drawn on the outside of the copies of the base graph, so that the original faces of the base graph are preserved. Any planar embedding of the stems may be chosen.

Now, given a sign function, Kasteleyn's theorem allows us to calculate the number of perfect matchings. In the original formulation of the theorem, a matrix is used, requiring an arbitrary enumeration of the vertices. To avoid the messiness of indexing vertices, we recast Kasteleyn's theorem using a linear operator, and view it when convenient as a matrix whose rows and columns are indexed by vertices, modulo permutation of the vertices.

Let $\G$ be the set of planar bipartite graphs, with vertices colored black and white, such that there are an equal number of black and white vertices. (If the number of black and white vertices differs, then there are clearly no matchings.) Note that, by definition, both a compound graph and its base graph are members of $\G$. 

For each graph $H$ in $\G$, define the following two vector spaces: Let $V_b$ be the space of $F$-valued functions on the black vertices, and let $V_w$ be the space of $F$-valued functions on the white vertices, where $F$ is the quotient field of the ring $R$ containing the edge weights. Addition and scalar multiplication are defined as usual. For each vector space, we take the set of indicator functions for each vertex as the standard basis.

Now we define $K\colon V_w \rightarrow V_b$ as the linear operator represented by the matrix $K = (K_{uv})$ for black vertices $u$ and white vertices $v$, with
$$K_{uv} = \begin{cases} \s(u, v) \w(u, v) &\mbox{if } u \sim v \\ 0 &\mbox{otherwise.} \end{cases}$$
(We write $u \sim v$ if $u$ and $v$ are connected by an edge, and let $(u, v)$ denote the corresponding edge.) So in other words, for any function $f \in V_w$, we have $$(Kf)(u) = \sum_{v \sim u} f(v) \s(u, v) \w(u, v).$$
Since an ordering of the vertices has not been chosen, the determinant of $K$ is defined only up to sign. 

\begin{theorem}[The Kasteleyn-Percus Theorem]
The determinant of $K$ equals, up to sign, the number of matchings of $H$.
\end{theorem}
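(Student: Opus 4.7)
The plan is to expand $\det K$ via the Leibniz formula and match its nonzero terms bijectively to perfect matchings of $H$. Fixing arbitrary orderings of the black and white vertices (which are equinumerous since $H \in \G$), we have
$$\det K = \sum_\pi \s(\pi) \prod_u K_{u, \pi(u)},$$
where $\pi$ ranges over bijections from black to white vertices. A term is nonzero precisely when $K_{u,\pi(u)} \neq 0$ for every black $u$, i.e., when the edges $\{(u, \pi(u))\}$ form a perfect matching $M$ of $H$; call the corresponding bijection $\pi_M$. Such a term equals
$$\s(\pi_M) \prod_{(u, v) \in M} \s(u, v)\,\w(u, v) = \epsilon(M) \cdot \w(M),$$
where $\epsilon(M) := \s(\pi_M)\prod_{(u, v)\in M} \s(u, v)$. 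To conclude, I would show that $\epsilon(M)$ is independent of $M$, so that $\det K = \epsilon \sum_M \w(M) = \epsilon \cdot \#H$ for some $\epsilon \in \{\pm 1\}$.

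The core step is to prove $\epsilon(M_1) = \epsilon(M_2)$ for any two matchings. For this, I would examine the symmetric difference $M_1 \triangle M_2$; since each vertex has degree $1$ in both $M_1$ and $M_2$, it has degree $0$ or $2$ in the symmetric difference, which therefore decomposes into a disjoint union of simple cycles alternating between $M_1$-edges and $M_2$-edges. Fix such a cycle $\C$ of length $2\ell$. By planarity, the vertices strictly interior to $\C$ are matched among themselves by both $M_1$ and $M_2$, so their number is even. Hence the defining property of a sign function applies and gives
$$\prod_{e \in \C} \s(e) = (-1)^{\ell - 1}.$$
Simultaneously, replacing the $\ell$ edges of $M_1$ along $\C$ by the $\ell$ edges of $M_2$ along $\C$ alters $\pi_M$ by a single $\ell$-cycle on the $\ell$ black vertices of $\C$, which multiplies $\s(\pi_M)$ by $(-1)^{\ell - 1}$. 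The two $(-1)^{\ell - 1}$ factors cancel at each component of $M_1 \triangle M_2$, yielding $\epsilon(M_1) = \epsilon(M_2)$.

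The main obstacle is the permutation-theoretic bookkeeping that identifies ``swapping $M_1$-edges for $M_2$-edges along $\C$'' with precisely a single $\ell$-cycle in $\pi_{M_2}\pi_{M_1}^{-1}$. One traces around $\C$ starting from a black vertex $b_1$, following its $M_1$-edge to a white vertex $w_1$, then the $M_2$-edge out of $w_1$ to the next black vertex $b_2$, and so on; this ordering shows that $\pi_{M_2}$ agrees with $\pi_{M_1}$ off of $\{b_1, \ldots, b_\ell\}$ and cyclically shifts these $\ell$ black vertices, contributing the claimed sign. Once this is verified, collecting contributions over all matchings gives $\det K = \epsilon \cdot \#H$; changing the initial orderings of vertices multiplies $\epsilon$ only by a global sign, accounting for the ``up to sign'' caveat in the statement.
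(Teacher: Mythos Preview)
Your argument is correct and is precisely the standard Kasteleyn--Percus proof: expand $\det K$ by Leibniz, identify nonzero terms with perfect matchings, and use the alternating-cycle decomposition of $M_1\triangle M_2$ together with the defining property of a sign function to show all terms carry the same global sign. The paper itself does not supply a proof but simply cites \cite{percus69}, so there is nothing further to compare.
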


\begin{proof}
The proof is well-known, and can be found in \cite{percus69}.
\end{proof}

We return to a discussion of sign functions, with the primary goal of proving their existence. In order to describe sign functions on compound graphs, we must make use of several propositions.

First, however, let us clarify some terminology. By ``faces" of a graph, we will be referring only to inner faces, excluding the outer face, unless otherwise specified. By ``edges" of a face we will refer to the multiset of edges encountered when traversing the boundary of the face. Thus, in Figure~\ref{fig:multiplicities}, the edges consist of $e_0, e_1, \hdots, e_{11}$ even though $e_1 = e_2$, $e_8 = e_{11}$ and $e_9 = e_{10}$. However, these pairs of equal edges will not be regarded as distinct in the edge set $E$. 

Finally, let $F$ be a face of the graph with $2m$ edges. Given any function $f$ from the edges to $\{1, -1\}$, let us call $F$ \emph{positive} with respect to $f$ if the product of $f$ over all edges of $F$ is $(-1)^{m - 1}$, and \emph{negative} otherwise. 

\begin{figure}[htb]
\centering
\begin{tikzpicture}[scale=.8]

\coordinate (p0) at (90:3);
\coordinate (p1) at (30:3);
\coordinate (p2) at (330:3);
\coordinate (p3) at (270:3);
\coordinate (p4) at (210:3);
\coordinate (p5) at (150:3);
\coordinate (p6) at (30:1);
\coordinate (p7) at (280:1.2);
\coordinate (p8) at (150:1.1);

\draw (p0) -- 
node[below] {$e_0$} (p1);
\draw (p1) --
node[above] {$e_1$} 
node[below] {$e_2$} (p6);
\draw (p1) --
node[left] {$e_3$} (p2);
\draw (p2) --
node[above] {$e_4$} (p3);
\draw (p3) --
node[above] {$e_5$} (p4);
\draw (p4) --
node[right] {$e_6$} (p5);
\draw (p5) --
node[below] {$e_7$} (p0);
\draw (p0) --
node[left] {$e_8$} 
node[right] {$e_{11}$} (p8);
\draw (p8) --
node[left] {$e_9$}
node[right] {$e_{10}$} (p7);

\foreach \x in {0,2,4,6,7}
\draw[fill=black] (p\x) circle (2pt);

\foreach \x in {1,3,5,8}
\draw[fill=white] (p\x) circle (2pt);

\end{tikzpicture}
\caption{\label{fig:multiplicities}Counting multiplicities of edges of a face}
\end{figure}
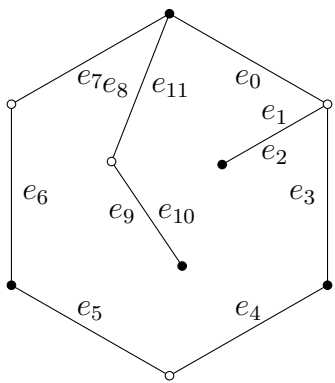

\begin{lemma}\label{lem:faces+}
If all faces in $G$ are positive with respect to $f$, then $f$ is a sign function.
\end{lemma}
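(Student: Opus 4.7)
The plan is to take an arbitrary simple cycle $\C$ of length $2\ell$ enclosing an even number of interior vertices, and express the product of $f$ over its edges in terms of the products over the faces it encloses. The key identity is that each interior edge (an edge strictly inside $\C$) lies on the boundary of exactly two interior faces, while each edge of $\C$ itself lies on the boundary of exactly one interior face. Therefore, if $F_1, \ldots, F_k$ are the interior faces with $2m_1, \ldots, 2m_k$ edges respectively, then
$$\prod_{i=1}^{k} \prod_{e \in F_i} f(e) = \prod_{e \in \C} f(e) \cdot \prod_{e \text{ interior}} f(e)^2 = \prod_{e \in \C} f(e),$$
since $f$ takes values in $\{\pm 1\}$ and so every interior edge contributes $f(e)^2 = 1$.

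By the hypothesis that every face is positive, the left-hand side equals $\prod_{i=1}^{k}(-1)^{m_i - 1} = (-1)^{M - k}$, where $M = \sum_i m_i$. So I need to verify that $M - k \equiv \ell - 1 \pmod 2$. Doubling the edge count gives $2M = 2 E_{\text{int}} + 2\ell$, where $E_{\text{int}}$ is the number of interior edges, whence $M = E_{\text{int}} + \ell$. Thus the desired congruence becomes $E_{\text{int}} - k \equiv -1 \pmod 2$, or equivalently $k - E_{\text{int}} \equiv 1 \pmod 2$.

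To get this, I would apply Euler's formula to the finite planar subgraph consisting of $\C$ together with everything inside it. Let $V_{\text{int}}$ be the number of interior vertices. The vertex, edge, and face counts (including the outer face) are $V_{\text{int}} + 2\ell$, $E_{\text{int}} + 2\ell$, and $k + 1$ respectively, so Euler's formula gives $V_{\text{int}} + k = 1 + E_{\text{int}}$, i.e., $k - E_{\text{int}} = 1 - V_{\text{int}}$. Since $V_{\text{int}}$ is assumed even, the right-hand side is odd, and we conclude $\prod_{e \in \C} f(e) = (-1)^{\ell - 1}$, as required.

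The main obstacle is really just the careful edge-counting and the correct bookkeeping in Euler's formula; the core idea (telescoping the face products via the fact that each interior edge appears twice) is standard. One subtlety worth flagging is that the definition allows a face's boundary to traverse the same edge twice (as in the figure accompanying the statement), so in the multiset of boundary incidences such a ``degenerate'' edge already contributes twice to one face rather than once to each of two faces; this is still consistent with the cancellation above, because the edge contributes $f(e)^2 = 1$ to the product either way. Once one is careful about this, the argument goes through without modification.
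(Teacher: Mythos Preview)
Your proof is correct and takes a different route from the paper's. The paper argues by induction on the number of faces enclosed by $\C$: the base case handles a single enclosed face (where any interior structure consists of doubled trees hanging off $\C$), and the inductive step splits the region bounded by $\C$ into the regions bounded by two smaller simple cycles $\C_1,\C_2$ sharing a common boundary path, combining the inductive hypothesis on each. Your argument instead multiplies the positivity conditions over all interior faces at once and invokes Euler's formula to settle the parity bookkeeping---more direct and global, at the cost of an explicit appeal to Euler. In fact your computation $k - E_{\text{int}} = 1 - V_{\text{int}}$ already gives $\prod_{e\in\C} f(e) = (-1)^{\ell + V_{\text{int}} - 1}$ with no parity assumption on $V_{\text{int}}$, which is exactly the stronger identity the paper establishes en route. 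One tacit hypothesis shared by both arguments: the subgraph on and inside $\C$ is assumed connected (your Euler step needs a single component; the paper's base case assumes the interior trees are attached to $\C$), which holds throughout the paper's applications.
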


\begin{proof}
We will prove the stronger statement that if all faces in $G$ are positive, then for all simple cycles $\C$ of length $|\C|$, strictly enclosing $A(\C)$ points,
$$\displaystyle\prod_{e \in \C} \s f = (-1)^{|\C|/2 + A(\C) - 1}.$$
The proof will be by induction on the number of faces contained in $\C$.

For the base case, suppose $\C$ contains only one face. The edges of the face form a cycle $\C^\prime,$ which may not necessarily be simple. If $\C^\prime = \C$, then the claim follows trivially. If not, then $\C^\prime$ consists of $\C$, together with one or more doubled trees extending into the interior of the face. Suppose that $A(\C)$ vertices are strictly enclosed by $\C.$ Then it is clear that the sum of the lengths of the trees must be $A(\C)$, so that $|\C^\prime| = |\C| + 2A(\C)$, where $|\cdot|$ denotes the length of the cycle, including multiplicities. Since the trees are all doubled,
\begin{align*}
\displaystyle\prod_{e \in \C} \s f &= \prod_{e \in \C^\prime} \s f\\
&= (-1)^{|\C^\prime|/2 - 1} \\
&= (-1)^{|\C|/2 + A(\C) - 1},
\end{align*}
as desired.

Now suppose $\C$ contains more than one face. Then the faces of $\C$ can be decomposed into the disjoint union of the faces contained by two smaller simple cycles, $\C_1$ and $\C_2$. Suppose the intersection of $\C_1$ and $\C_2$ is a path with $\ell$ vertices. We have, by the inductive hypothesis,
\begin{align*}
\displaystyle\prod_{e \in \C} \s f &= \prod_{e \in \C_1} \s f \prod_{e \in \C_2} \s f \\
&= (-1)^{|\C_1|/2 + A(\C_1) - 1} (-1)^{|\C_2|/2 + A(\C_2) - 1}.
\end{align*}
Since $|\C_1| + |\C_2| = |\C| + 2\ell - 2$ and $A(\C) = A(\C_1) + A(\C_2) + \ell - 2,$ we have
$$\displaystyle\prod_{e \in \C} \s f  = (-1)^{|\C|/2 + A(\C) - 1},$$
completing the proof.
\end{proof}

\begin{lemma}\label{lem:sgn exists}
Every planar bipartite graph has a sign function.
\end{lemma}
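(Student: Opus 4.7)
My plan is to reduce via Lemma \ref{lem:faces+}: it suffices to exhibit $f : E \to \{1, -1\}$ making every inner face positive. I would then induct on $|E|$. The base case is vacuous --- if $G$ has no cycles, then it has no inner faces, so any $f$ works.

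For the inductive step, I would first reduce to the connected case, applying the construction to each component separately. Granted that $G$ is connected and has at least one cycle, I would locate an edge $e$ that lies on the boundary of the outer face but is not a bridge. Such an $e$ borders the outer face on one side and exactly one inner face $F$ on the other. Deleting $e$ yields $G' = G - e$ with one fewer edge and exactly one fewer inner face: the face $F$ has merged into the outer face, while every other inner face of $G$ is preserved unchanged. The inductive hypothesis then supplies a function $f'$ on $E(G')$ making every inner face of $G'$ positive. Because $G$ is bipartite, $F$ has an even number $2m$ of boundary edges, of which all but $e$ are already assigned by $f'$. I would set $f(e) \in \{1, -1\}$ to be whichever sign makes the product of $f$ over the edges of $F$ equal $(-1)^{m-1}$, so that $F$ becomes positive while no other face is disturbed. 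Lemma \ref{lem:faces+} then certifies $f$ as a sign function of $G$.

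The main point requiring care is the existence of the non-bridge edge $e$ on the outer face. I would verify this by observing that in a connected planar graph with a cycle, the boundary walk of the outer face traverses each bridge twice and each non-bridge edge once; if every edge met by the outer boundary were a bridge, the graph would in fact be a tree and contain no cycle. So at least one non-bridge edge lies on the outer face, and its removal produces the face merge described above. Bipartiteness itself enters only to guarantee $F$ has even length, so that $(-1)^{m-1}$ is well-defined; the inductive skeleton is otherwise purely a statement about planar embeddings.
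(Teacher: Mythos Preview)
Your proposal is correct and follows essentially the same approach as the paper: peel off an inner face adjacent to the outer face, apply the inductive hypothesis, and use the free choice of sign on the removed edge(s) to force the restored face to be positive, then invoke Lemma~\ref{lem:faces+}. The only cosmetic difference is that the paper inducts on the number of faces and deletes all edges of $F$ not shared with another face (assigning them value $1$ and flipping one if needed), whereas you induct on $|E|$ and delete a single non-bridge outer edge; both accomplish the same face-by-face construction.
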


\begin{proof}
The main idea is the simple observation that negating the value of $f$ on a particular edge of face $F$ switches $F$ from positive to negative, or vice versa. We will prove the statement by induction on the number of faces of the graph. 

In the base case, when the graph has no faces, there exists a sign function vacuously. Now assume that there exists a sign function for every graph with $k - 1$ faces, and consider a graph with $k$ faces. Choose an edge $e_0$ which borders the outer face, and suppose the inner face it borders is face $F$. Now if we delete all the edges which belong to $F$ but no other face of the graph, we are left with a graph $G^\prime$ with $k - 1$ faces. By the inductive hypothesis, there exists a sign function $f^\prime$ on this graph. For the original graph $G$, we now define $f(e) = f^\prime(e)$ if $e$ belongs to $G^\prime$, and $f(e) = 1$ otherwise. By definition, every face other than $F$ must be positive. If $F$ is positive, we are done. If not, set $f(e_0) = -1$. Then $F$ becomes positive while leaving all the other faces unchanged, so $f$ is a sign function.
\end{proof}

Now we would like to describe sign functions $\s$ on an arbitrary compound graph $H$ such that $\s e_1 = \s e_2$ whenever $e_1 \cong e_2$ (we call this ``respecting the equivalence relation"). Call the set of such functions $S(H)$. The ultimate goal of our discussion is to show that $S(H)$ is non-empty.

First, however, we must introduce one more related graph, the \emph{reduced graph}. The reduced graph, denoted $R$, is formed by taking the compound graph, for each copy of the base graph identifying all its vertices, then deleting the resulting self-loops. The planar embedding of the stems and leaves are retained. The reduced graph is also a planar bipartite graph, where the copies of the base graph are of one color and the stems and leaves are of another. In general, $R$ has no perfect matchings, but it will still be convenient to consider sign functions on $R$. A reduced graph with an accompanying sign function is shown in Figure~\ref{fig:reduced}.

\begin{figure}[htb]
\centering
\begin{tikzpicture}

\draw (0, 4) -- (2, 4) -- (4, 4);
\draw (4, 0) -- (4, 2) -- (4, 4);
\draw (0, 4) .. controls (.3, 4.1) and (1, 4.5) .. (2, 4.5);
\draw (4, 0) .. controls (4.1, .3) and (4.5, 1) .. (4.5, 2);
\draw[style=dashed] (4.5, 2) .. controls (4.5, 3) and (4.1, 3.7) .. (4, 4);
\draw (3.5, 2) .. controls (3.5, 3) and (3.9, 3.7) .. (4, 4);
\draw (0, 0) -- (2, 0);
\draw (0, 0) .. controls (.3, .1) and (1, .5) .. (2, .5) .. controls (3, .5) and (3.7, .1) .. (4, 0);
\draw (2, -.5) .. controls (3, -.5) and (3.7, -.1) .. (4, 0);
\draw (0, 0) .. controls (.3, .15) and (.9, 1) .. (2, 1);
\draw[style=dashed] (0, 0) -- (0, 2);
\draw (0, 2) -- (0, 4);
\draw[style=dashed] (0, 0) .. controls (.15, .3) and (1, .9) .. (1, 2);
\draw (1, 2) .. controls (1, 3.1) and (.15, 3.7) .. (0, 4);
\draw (.5, 2) .. controls (.5, 3) and (.1, 3.7) .. (0, 4);
\draw (0, 0) .. controls (-.1, .3) and (-.5, 1) .. (-.5, 2) .. controls (-.5, 3) and (-.1, 3.7) .. (0, 4);
\draw (-1, 2) .. controls (-1, 1) and (-.4, .3) .. (0, 0);
\draw (0, 0) .. controls (.3, -.4) and (1, -1) .. (2, -1);
\draw[dashed] (2, -1) .. controls (3, -1) and (3.7, -.4) .. (4, 0);
\draw (4, 0) -- (6, 0) -- (8, 0) -- (8, 2);
\draw (4, 0) .. controls (4.3, -.1) and (5, -.5) .. (6, -.5);

\foreach \i in {(0, 0), (4, 0), (0, 4), (4, 4), (8, 0), (1, 2), (2, 1), (2, 4), (2, 4.5), (4, 2), (4.5, 2), (3.5, 2), (2, 0), (2, .5), (2, -.5), (0, 2), (.5, 2), (-.5, 2), (-1, 2), (2, -1), (6, 0), (8, 2), (6, -.5)}
\draw[fill=black] \i circle (2pt);

\end{tikzpicture}
\caption{\label{fig:reduced}The reduced graph of the compound graph in Figure~\ref{fig:ex}, labeled with a sign function. Solid lines represent a weight of 1, while dashed edges represent a weight of $-1$.}
\end{figure}
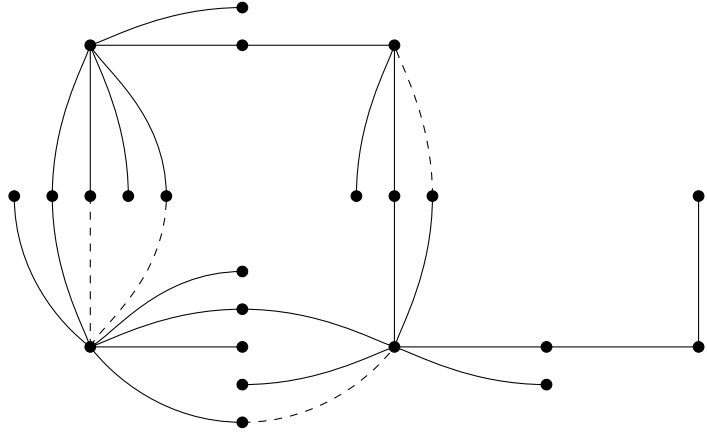

Let $S(G)$ denote the set of sign functions on the base graph of $H$, and let $S(R)$ denote the set of sign functions on the reduced graph of $H$.

\begin{lemma}\label{lem:restriction}
There exists a one-to-one correspondence $\phi\colon S(H) \rightarrow S(G) \times S(R)$.  
\end{lemma}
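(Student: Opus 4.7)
The natural candidate for $\phi$ is restriction: send $\s \in S(H)$ to $(\s|_G, \s|_R)$, where $\s|_G$ is the common restriction of $\s$ to any single copy of $G$ in $H$ (well-defined because $\s$ respects $\cong$) and $\s|_R$ is the restriction of $\s$ to the stem and leaf edges, which are exactly the edges of $R$. The plan is to verify the image lies in $S(G) \times S(R)$, observe injectivity, and prove surjectivity by running the verification in reverse.

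For the image, the component $\s|_G$ lies in $S(G)$ because every inner face of $G$ appears unchanged as an inner face of $H$ on any copy and is therefore positive under $\s$; Lemma~\ref{lem:faces+} promotes face positivity to the sign function property. The component $\s|_R$ is the delicate part. Each inner face $F_R$ of $R$ corresponds to a stem-region inner face $F_H$ of $H$, obtained by ``un-contracting'' each copy-vertex traversal along $\partial F_R$ into the corresponding arc of $\partial C_j$ between consecutive stem/leaf attachments. If $|F_H|/2 = m_R + m_G$, with $2m_R$ stem/leaf edges and $2m_G$ arc edges, then positivity of $F_H$ under $\s$ reads $\prod_{e \in F_R} \s(e) \cdot \prod_{\text{arcs}} \s(e) = (-1)^{m_R + m_G - 1}$, and the desired positivity of $F_R$ under $\s|_R$ is equivalent to the arc-product identity $\prod_{\text{arcs}} \s_G(e) = (-1)^{m_G}$.

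The crux is this arc-product identity. I would link the arcs of $F_H$ together at their shared stem attachments (which are equivalent $G$-vertices) to form a closed walk $W$ in $\partial G$. Two structural facts constrain $W$: first, stem attachments are white and $\partial G$ is a bipartite cycle, so each individual arc has even length; second, the supergraph is bipartite, so the copies visited by $\partial F_H$ alternate in reflection status and hence consecutive arcs alternate between cw and ccw traversals of $\partial G$. The remaining input is planarity: because $F_H$ is a planar disk in $H$, lifting the consecutive arcs to the universal cover $\mathbb{R}$ of $\partial G$ and tracking how they meet at the stem attachment points shows that the winding number of $W$ around $\partial G$ must be $0$. Given winding $0$, each edge of $\partial G$ is traversed by $W$ an even number of times, so $\prod_{e \in W} \s_G(e) = 1$; the even-arc-length fact forces $m_G$ to be even, so $(-1)^{m_G} = 1$, matching the product.

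Injectivity of $\phi$ is immediate, since $\s$ is determined on base edges by $\s|_G$ (via $\cong$) and on stem/leaf edges by $\s|_R$. For surjectivity, given $(\s_G, \s_R) \in S(G) \times S(R)$, define $\s$ on $H$ by $\s_G$ on base edges and $\s_R$ on stem/leaf edges; the verification above, read in reverse, shows that every stem-region inner face of $H$ is positive under $\s$, while inner faces within copies are positive because $\s_G$ is a sign function on $G$, and $\s$ respects $\cong$ by construction. The main obstacle is precisely the arc-product parity identity (equivalently, the winding-zero claim); once it is in hand, the rest is face-by-face bookkeeping and routine equivalence-class manipulation.
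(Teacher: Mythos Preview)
Your approach is essentially the paper's: both define $\phi$ by restriction to $G$ and to $R$, check base faces trivially, and reduce the stem-region faces to the arc-product identity $\prod_{\text{arcs}}\s(e)=(-1)^{m_G}$. Where you differ is in the justification of that identity. You argue topologically (alternating arc directions $+$ planarity $\Rightarrow$ winding zero $\Rightarrow$ each edge of $\partial G$ hit evenly); the paper instead fixes a reference vertex $v$ on $\partial G$, writes each arc product as $P(q_i^-,q_i^+)=C^{\epsilon_i}P(q_i^-,v_i)P(v_i,q_i^+)$, uses the reflection between adjacent copies to get $P(v_i,q_i^+)=P(q_{i+1}^-,v_{i+1})$ and hence telescope the product to $C^m$, and then runs an ``Alice and Bob'' zig-zag parity argument to show $m$ is even. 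The telescoping via the reflection identity is the paper's concrete substitute for your winding-zero claim, and it sidesteps having to argue directly about degrees of maps $S^1\to\partial G$.

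One genuine gap in your writeup: the inference ``each arc has even length, so $m_G$ is even'' is not valid on its own --- a sum of even numbers need not be divisible by $4$. What actually forces $m_G$ even is the combination of even arc lengths with the alternating-direction/winding-zero structure: if arc $i$ has length $2k_i$ and the signed displacements satisfy $\sum_i(-1)^i k_i=0$, then $\sum_i k_i=2\sum_{i\text{ odd}}k_i$ is even. You have all the ingredients for this, but you should invoke them explicitly rather than citing even arc length alone. Likewise, your winding-zero assertion (``because $F_H$ is a planar disk'') is the crux and deserves an actual argument; the paper's Alice--Bob device is one way to supply it.
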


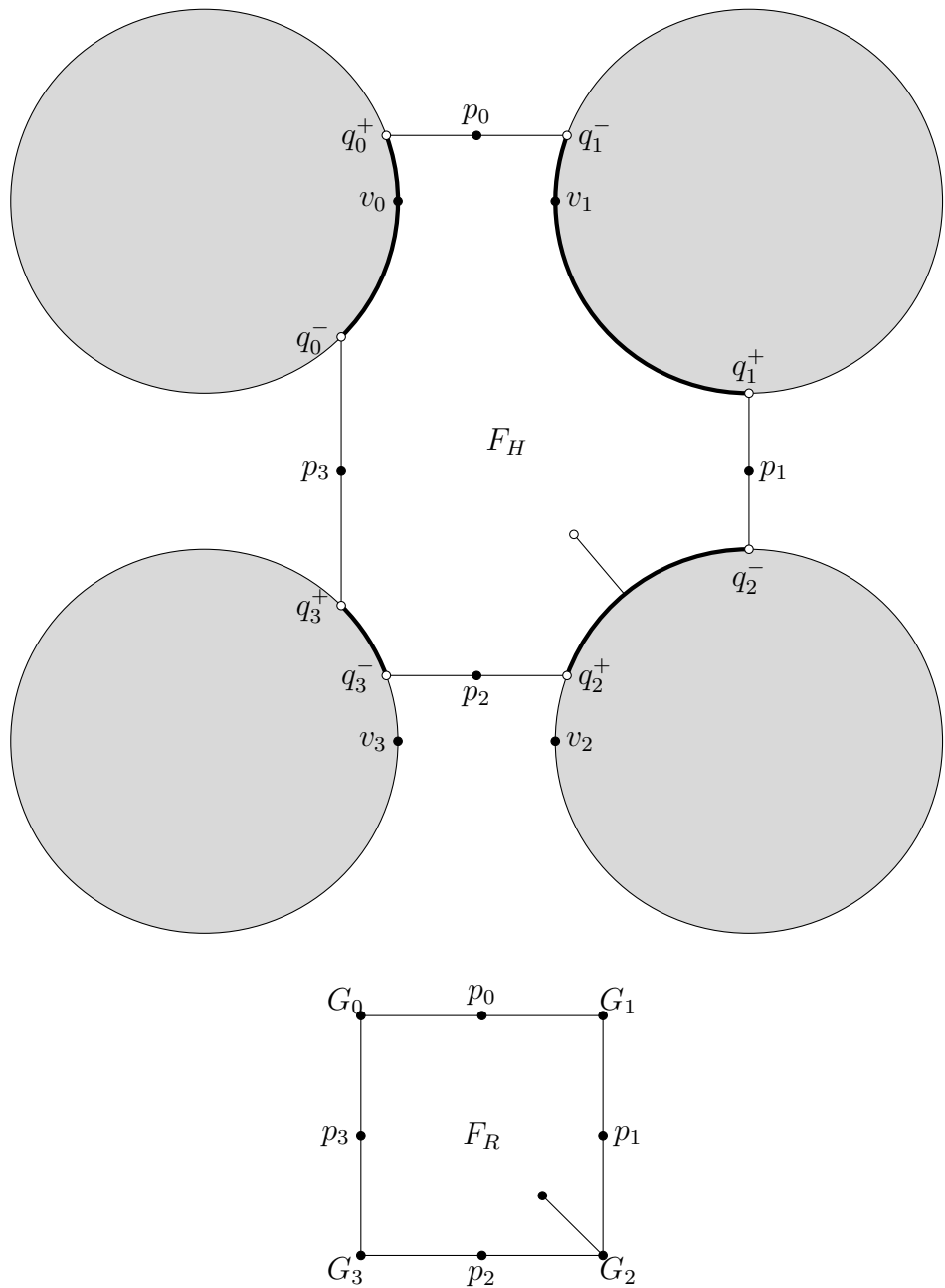
\begin{figure}[ht!]
\begin{subfigure}{1.0\textwidth}
\centering
\begin{tikzpicture}[scale=.8]

\def\s{9};

\coordinate (p0) at (0, \s);
\coordinate (p1) at (\s, \s);
\coordinate (p2) at (\s, 0);
\coordinate (p3) at (0, 0);

\def\r{3.2};
\def\a{20};
\def\b{90};
\def\c{20};
\def\d{45};

\coordinate (q0+) at ($(p0) + (\a:\r)$);
\coordinate (q1-) at ($(p1) + (180 - \a:\r)$);
\coordinate (q1+) at ($(p1) + (-\b:\r)$);
\coordinate (q2-) at ($(p2) + (\b:\r)$);
\coordinate (q2+) at ($(p2) + (180 - \c:\r)$);
\coordinate (q3-) at ($(p3) + (\c:\r)$);
\coordinate (q3+) at ($(p3) + (\d:\r)$);
\coordinate (q0-) at ($(p0) + (-\d:\r)$);

\foreach \x in {0,...,3}
{
	\draw (p\x) node {$G_\x$};
	\draw[fill=black!15] (p\x) circle (\r cm);
}

\draw (5, 5) node {$F_H$};

\draw[fill=black] (p0) + (\r,0) 
node[left] {$v_0$}
circle (2pt);
\draw[fill=black] (p1) + (-\r, 0)
node[right] {$v_1$}
circle (2pt);
\draw[fill=black] (p2) + (-\r, 0)
node[right] {$v_2$}
circle (2pt);
\draw[fill=black] (p3) + (\r, 0)
node[left] {$v_3$}
circle (2pt);

\draw (q0+) node[left] {$q_0^+$};
\draw (q1-) node[right] {$q_1^-$};
\draw (q1+) node[above] {$q_1^+$};
\draw (q2-) node[below] {$q_2^-$};
\draw (q2+) node[right] {$q_2^+$};
\draw (q3-) node[left] {$q_3^-$};
\draw (q3+) node[left] {$q_3^+$};
\draw (q0-) node[left] {$q_0^-$};

\draw[ultra thick] (q0+) arc (\a:-\d:\r);
\draw[ultra thick] (q1+) arc (360-\b:180 - \a:\r);
\draw[ultra thick] (q2+) arc (180 - \c:\b:\r);
\draw[ultra thick] (q3+) arc (\d:\c:\r);

\draw (q0+) -- (q1-);
\draw (q1+) -- (q2-);
\draw (q2+) -- (q3-);
\draw (q3+) -- (q0-);

\draw[fill=black] ($(q0+)!.5!(q1-)$)
node[above] {$p_0$}
circle (2pt);
\draw[fill=black] ($(q1+)!.5!(q2-)$)
node[right] {$p_1$}
circle (2pt);
\draw[fill=black] ($(q2+)!.5!(q3-)$)
node[below] {$p_2$}
circle (2pt);
\draw[fill=black] ($(q3+)!.5!(q0-)$)
node[left] {$p_3$}
circle (2pt);

\foreach \x in {0,...,3}
\draw[fill=white] (q\x-) circle (2pt) (q\x+) circle (2pt);

\def\theta{130};

\draw (p2) + (\theta:4.5) -- ($(p2) + (\theta:\r)$);
\draw[fill=white] (p2) + (\theta:4.5) circle (2pt);

\end{tikzpicture}
\vspace{.2in}
\end{subfigure}
\begin{subfigure}{1.0\textwidth}
\centering
\begin{tikzpicture}[scale=.8]

\coordinate (p0) at (2, 4);
\coordinate (p1) at (4, 2);
\coordinate (p2) at (2, 0);
\coordinate (p3) at (0, 2);
\coordinate (G0) at (0, 4);
\coordinate (G1) at (4, 4);
\coordinate (G2) at (4, 0);
\coordinate (G3) at (0, 0);

\def\pt{.25};

\foreach \x in {0, 1, 2, 3}
\draw[fill=black] (p\x) circle (2pt) (G\x) circle (2pt);

\draw (G0) -- (p0) -- (G1) -- (p1) -- (G2) -- (p2) -- (G3) -- (p3) -- cycle;
\draw (G2) -- (3, 1);

\draw (p0) node[above] {$p_0$};
\draw (p1) node[right] {$p_1$};
\draw (p2) node[below] {$p_2$};
\draw (p3) node[left] {$p_3$};

\draw (2, 2) node {$F_R$};

\draw (G0) + (-\pt, \pt) node {$G_0$};
\draw (G1) + (\pt, \pt) node {$G_1$};
\draw (G2) + (\pt, -\pt) node {$G_2$};
\draw (G3) + (-\pt, -\pt) node {$G_3$};

\draw[fill=black] (3, 1) circle (2pt);

\end{tikzpicture}
\end{subfigure}
\caption{\label{fig:reduced-positivity}Proof that $F_H$ is positive if and only if $F_R$ is positive.}
\end{figure}

\begin{proof}
First, let $\s|_G$ denote taking the restriction of $\s$ to $G$. This map is well-defined because of the condition on sign functions in $S(H)$. Likewise, define $\s|_R$ to be the restriction of $\s$ to $R$. We claim that restriction to $G$ and $R$ determines a one-to-one correspondence between $S(H)$ and $S(G) \times S(R)$. 

Consider a sign function $\s$ on $H$ and its restrictions $\s|_G$ and $\s|_R$. We will say faces of $H$ are positive or negative with respect to $\s$, faces of $G$ are positive or negative with respect to $\s|_G$, and faces of $R$ are positive or negative with respect to $\s|_R$. Note that the faces of $H$ may be partitioned into two categories: the set $A$ of faces which correspond to faces of $G$, and the set $B$ of faces which correspond to faces of $R$. By definition, all faces in $A$ are positive if and only if all faces of $G$ are positive. We must show that all faces in $B$ are positive if and only if all faces in $R$ are positive.

Choose any face $F_H$ in $B$, and suppose that when enumerated in clockwise order the stems read $p_0, p_1, \hdots, p_{\ell - 1},$ where $\ell$ is even since the supergraph is bipartite. Suppose also that each stem $p_i$ connects to copies $G_i$ and $G_{i + 1}$ of the base graph, modulo $\ell$. Let each $p_i$ connect to white vertices $q_i^+$ on $G_i$ and $q_{i + 1}^-$ on $G_{i + 1}$. See Figure~\ref{fig:reduced-positivity} for an illustration. Now the corresponding face $F_R$ of $R$ consists only of the edges incident to the stems and any leaves which happen to lie inside the face. For any vertices $u$ and $v$ on the outer face of a copy of the base graph, let $P(u, v)$ be the product of $\s$ over the edges of the path travelling counterclockwise along the outer face from $u$ to $v$. Thus,
$$\displaystyle\prod_{e \in F_H} \s e = \prod_{e \in F_R} \s e \prod_{i = 0}^{\ell-1} P(q_i^-, q_i^+).$$

Choose a vertex $v$ of the base graph, and let the vertex of copy $G_i$ corresponding to vertex $v$ be denoted $v_i$. Note that if $v_i$ lies on the path counterclockwise from $q_i^-$ to $q_i^+$, then $P(q_i^-, q_i^+) = P(q_i^-, v_i)P(v_i, q_i^+)$. Otherwise, $P(q_i^-, q_i^+) = CP(q_i^-, v_i)P(v_i, q_i^+),$ where $C$ denotes the product of $f$ along the entire outside face of $G$. So
$$\displaystyle\prod_{i = 0}^{\ell-1} P(q_i^-, q_i^+) = C^m \prod P(q_i^-, v_i)P(v_i, q_i^+),$$
where $m$ is the number of copies of the base graph such that $v_i$ lies outside the path from $q_i^-$ to $q_i^+$. 

We claim that $m$ must be even. Imagine Alice is an ant traveling clockwise around $F_H$ from vertex to vertex. Meanwhile, Bob travels along the outer face of the base graph $G$, such that Bob is always at the base vertex corresponding to the vertex Alice is at (ignoring the stems $p_i$). Now as Alice travels around $F_H$, Bob zig-zags back and forth along the outer face, turning around each time after he has visited the vertex corresponding to $q_i^+$ (equivalently, $q_{i + 1}^-$) twice. When Alice completes her circuit, Bob returns to his starting point. So Bob must have visited the base vertex $v$ an even number of times, or he would not have been able to return to his starting point. Consequently, Alice must have visited an even number of the corresponding vertices $v_i$. 

Therefore, $C^m = 1$. Since $q_i^+ \cong q_{i + 1}^-$ and $v_i \cong v_{i+1}$, and they are on neighboring copies of the base graph, we have $P(v_i, q_i^+) = P(q_{i+1}^-, v_{i+1}).$ Thus 
\begin{align*}
\displaystyle\prod_{i = 0}^{\ell-1} P(q_i^-, q_i^+) &= \prod P(q_i^-, v_i)P(v_i, q_i^+) \\
&= \prod P(q_{i + 1}^-, v_{i+1}) P(v_i, q_i^+) \\
&= \prod P(v_i, q_i^+)^2 \\
&= 1,
\end{align*}
so 
$$\prod_{e \in F_H} \s e = \prod_{e \in F_R} \s e.$$
Since all the $q_i$ are white, it follows that the length of each path from $q_i^-$ to $q_i^+$ is even, so the difference between the number of edges in $F_H$ and $F_R$ is divisible by 4. The number of points strictly inside each face is also the same. Therefore $F_H$ is positive if and only if $F_R$ is positive, as desired.
\end{proof}

The importance of this lemma is that it shows, together with Lemma ~\ref{lem:sgn exists}, that there exists a sign function which respects the equivalence relation on any compound graph. It is also useful for application to graphs, by allowing us to easily determine the values of the sign function on the stems and leaves.

Given a compound graph $H$, we now define a \emph{sign-weighted compound graph}. Take a sign function in $S(H)$, and modify the weights on all edges $e$ attached to stems or leaves by multiplying the weight by $\s e$. The sign-weighted version of a graph $H$ will be denoted by $\swh$, where $\swh$ is understood to depend on the choice of sign function. As we will see, the process of sign-weighting compound graphs is necessary so that afterwards, in the Kasteleyn matrix, the entries corresponding to the edges of the reduced graph assume values independent of the topological configuration of the reduced graph.
\end{section}

\begin{section}{The Divisibility Theorem}
\label{sec:div}
We begin by presenting the zero-sum lemma. Consider a compound graph $H$, and pick a leaf $p$ attached to vertex $q$. A sibling of $H$ is a compound graph which can be formed by deleting $p$ and adding a new leaf $p^\prime$ attached to any other vertex $q^\prime \cong q$. The set containing $H$ together with its siblings shall be known as a \emph{family}, with respect to leaf $p$. A family of compound graphs is shown in Figure~\ref{fig:family}. Note that although technically the vector spaces $V_b$ and $V_w$ are different for each member of the family, by identifying the leaf $p$ as it moves around we can treat them as the same vector space, so that the Kasteleyn operator is defined on the same space for each sibling. 

\begin{figure}[htb]
\centering
\begin{subfigure}{1.0\textwidth}
\hspace{1.2in}
\begin{tikzpicture}[scale=.8]

\foreach \x in {3, 7}
\foreach \y in {1, 3}
{
	\draw (\x - 1, \y) -- (\x + 1, \y);
}

\draw (2, 2) -- +(1, 0) node[above] {$p$};
\draw (2, 0) -- +(1, 0);
\draw (6, 2) -- +(1, 0);
\draw (9, 3) -- +(0, 1);

\foreach \x in {0, 4, 8}
{
	\draw[fill=black!15] (\x, 0) rectangle (\x + 2, 3);
	\draw (\x, 0) grid (\x + 2, 3);
	\foreach \i in {0,...,2}
	\foreach \j in {0,...,3}
	{
		\pgfmathsetmacro{\k}{\i+\j};
		\ifthenelse{\isodd{\k}}
		{\draw[fill=white] (\x + \i, \j) circle (2pt);}
		{\draw[fill=black] (\x + \i, \j) circle (2pt);}
	}
}

\foreach \x in {3, 7}
\foreach \y in {1, 3}
{
	\draw[fill=black] (\x, \y) circle (2pt);
}

\draw[fill=white] (3, 2) circle (2pt);
\draw[fill=white] (3, 0) circle (2pt);
\draw[fill=white] (7, 2) circle (2pt);
\draw[fill=white] (9, 4) circle (2pt);

\end{tikzpicture}
\vspace{.3in}
\end{subfigure}
\vspace{.3in}
\begin{subfigure}{1.0\textwidth}
\hspace{1.2in}
\begin{tikzpicture}[scale=.8]
\foreach \x in {3, 7}
\foreach \y in {1, 3}
{
	\draw (\x - 1, \y) -- (\x + 1, \y);
}

\draw (3, 2) node[above] {$p$} -- +(1, 0);
\draw (2, 0) -- +(1, 0);
\draw (6, 2) -- +(1, 0);
\draw (9, 3) -- +(0, 1);

\foreach \x in {0, 4, 8}
{
	\draw[fill=black!15] (\x, 0) rectangle (\x + 2, 3);
	\draw (\x, 0) grid (\x + 2, 3);
	\foreach \i in {0,...,2}
	\foreach \j in {0,...,3}
	{
		\pgfmathsetmacro{\k}{\i+\j};
		\ifthenelse{\isodd{\k}}
		{\draw[fill=white] (\x + \i, \j) circle (2pt);}
		{\draw[fill=black] (\x + \i, \j) circle (2pt);}
	}
}

\foreach \x in {3, 7}
\foreach \y in {1, 3}
{
	\draw[fill=black] (\x, \y) circle (2pt);
}

\draw[fill=white] (3, 2) circle (2pt);
\draw[fill=white] (3, 0) circle (2pt);
\draw[fill=white] (7, 2) circle (2pt);
\draw[fill=white] (9, 4) circle (2pt);
\end{tikzpicture}
\end{subfigure}
\begin{subfigure}{1.0\textwidth}
\hspace{1.2in}
\begin{tikzpicture}[scale=.8]
\foreach \x in {3, 7}
\foreach \y in {1, 3}
{
	\draw (\x - 1, \y) -- (\x + 1, \y);
}
\draw[white] (3, 3) -- (4, 3);
\draw[dashed] (3, 3) -- (4, 3);

\draw (10, 2) -- +(1, 0) node[above] {$p$};
\draw (2, 0) -- +(1, 0);
\draw (6, 2) -- +(1, 0);
\draw (9, 3) -- +(0, 1);

\foreach \x in {0, 4, 8}
{
	\draw[fill=black!15] (\x, 0) rectangle (\x + 2, 3);
	\draw (\x, 0) grid (\x + 2, 3);
	\foreach \i in {0,...,2}
	\foreach \j in {0,...,3}
	{
		\pgfmathsetmacro{\k}{\i+\j};
		\ifthenelse{\isodd{\k}}
		{\draw[fill=white] (\x + \i, \j) circle (2pt);}
		{\draw[fill=black] (\x + \i, \j) circle (2pt);}
	}
}

\foreach \x in {3, 7}
\foreach \y in {1, 3}
{
	\draw[fill=black] (\x, \y) circle (2pt);
}

\draw[fill=white] (11, 2) circle (2pt);
\draw[fill=white] (3, 0) circle (2pt);
\draw[fill=white] (7, 2) circle (2pt);
\draw[fill=white] (9, 4) circle (2pt);
\end{tikzpicture}
\end{subfigure}
\caption{\label{fig:family}Example of a family of sign-weighted compound graphs with respect to leaf $p$. The dashed line indicates an edge weight of $-1$. }
\end{figure}
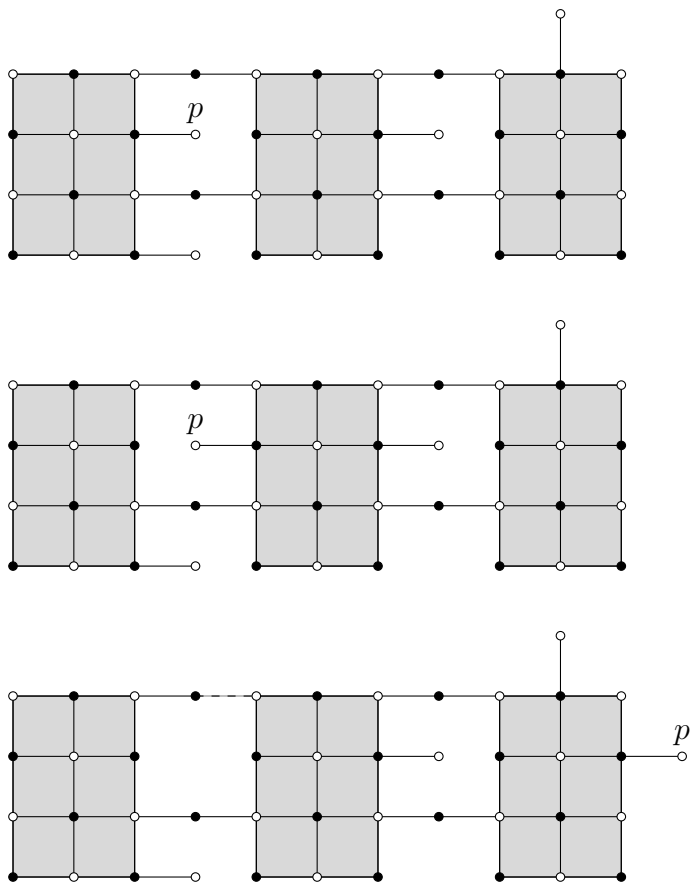

\begin{lemma}[Zero-sum lemma] Let $H_1, H_2, \hdots, H_k$ be all the members of a family. Then
$$\#\swh_1 \pm \#\swh_2 \pm \cdots \pm \#\swh_k = 0,$$
for some choice of signs.
\label{lem:zerosum}
\end{lemma}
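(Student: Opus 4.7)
The plan is to apply Kasteleyn--Percus (so that $\#\swh_i = \pm \det K_i$) and then reduce the identity to exhibiting an explicit vector in the column span of a shared Kasteleyn submatrix. The key initial observation is that in $\swh_i$, the leaf $p$ has degree $1$ and its unique edge has weight $1$; sign-weighting multiplies that weight by $\s(p,q_i)$, and the Kasteleyn entry multiplies by $\s(p,q_i)$ again, so the two factors cancel and the $p$-column of $K_i$ is exactly the standard basis vector $e_{q_i}$. Consequently, all the $K_i$ share a common submatrix $K'$ obtained by deleting the $p$-column, and by multilinearity of the determinant in that column,
\[
  \sum_i \epsilon_i \det K_i \;=\; \det\!\Bigl[\,K' \;\Big|\; \textstyle\sum_i \epsilon_i e_{q_i}\,\Bigr].
\]
Thus it is enough to choose signs $\epsilon_i \in \{\pm 1\}$ so that $v := \sum_i \epsilon_i e_{q_i}$ lies in the column span of $K'$, as the Kasteleyn signs relating $\#\swh_i$ to $\det K_i$ can be absorbed into the $\epsilon_i$.

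I would construct an explicit $f$ with $K'f = v$ using the block structure of $K'$. By Lemma~\ref{lem:restriction}, the sign function $\s$ respects the equivalence relation, so for every copy $G_i$ the restriction of $\s$ yields the same Kasteleyn matrix $K_G$ of the base graph; on the base-vertex block, $K'$ is therefore the block-diagonal operator $\bigoplus_i K_G$. Working over the quotient field $F$, I may assume $K_G$ is invertible (the zero-sum identity is polynomial in the edge weights, so it suffices to treat generic weights). Let $h := K_G^{-1} e_q$, set
\[
  f(w^{(i)}) := \epsilon_i\,h(w), \qquad f(\ell) := 0 \text{ for every leaf } \ell,
\]
and observe that $(K'f)$ on the black base rows of $G_i$ equals $\epsilon_i K_G h = \epsilon_i e_{q_i}$, matching $v$ on every base row.

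The only remaining requirement is on the stem rows. By the same sign-weighting cancellation, $K'_{s,w^{(i_j)}} = 1$ for a stem $s$ joining equivalent white vertices $w^{(i_1)}, w^{(i_2)}$, so
\[
  (K'f)(s) = h(w)\bigl(\epsilon_{i_1}+\epsilon_{i_2}\bigr),
\]
which vanishes exactly when $\epsilon_{i_1} = -\epsilon_{i_2}$. Since the supergraph is bipartite, I can take $\epsilon_i := +1$ on one color class and $-1$ on the other; every supergraph edge then joins oppositely-signed supervertices, simultaneously killing every stem equation. The main obstacle I initially anticipated---a delicate cycle-consistency argument on the reduced graph $R$---collapses because sign-weighting uniformizes the stem Kasteleyn entries to $+1$, reducing the entire compatibility problem to a proper $2$-coloring of the supergraph, which is exactly what bipartiteness supplies.
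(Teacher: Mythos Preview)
Your approach mirrors the paper's closely: both observe (via sign-weighting) that the Kasteleyn matrices $K_i$ agree off the $p$-column with $K_i e_p = e_{q_i}$, both take the signs $\epsilon_i$ to be the bipartite $2$-coloring of the supergraph, and both reduce the lemma to showing that the matrix obtained by replacing the $p$-column with $\sum_i \epsilon_i e_{q_i}$ is singular. The paper finishes by dimension counting---it exhibits subspaces $S \subset V_w$ and $T \subset V_b$ of equal dimension $N$ with $K'(S \oplus \langle e_p\rangle) \subseteq T$---whereas you construct an explicit preimage of $v = \sum_i \epsilon_i e_{q_i}$ using $h = K_G^{-1} e_q$. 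Your vector $f$ is in fact an element of the paper's subspace $S$, so the two arguments are the same linear-algebraic fact viewed from two angles.

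There is one small gap in your version. The ``generic weights'' reduction to invertible $K_G$ succeeds only when $\det K_G$ is a nonzero polynomial, i.e., when the base graph $G$ admits at least one perfect matching; the paper's hypotheses do not require this (only that $G$ has equally many black and white vertices). If $G$ has no matching then $K_G$ is singular even over the function field and $K_G^{-1} e_q$ is undefined. The patch fits your own framework: take any nonzero $h \in \ker K_G$ and build the same $f(w^{(i)}) = \epsilon_i h(w)$, $f(\ell)=0$; your identical base-row and stem-row computations then give $K' f = 0$ on every row, so the non-$p$ columns are already dependent and the determinant vanishes regardless of the $p$-column. The paper's dimension count absorbs both the invertible and singular cases uniformly, which is its modest advantage over the explicit-inverse formulation.
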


The actual determination of signs seems to be rather complicated and depends on a number of factors, such as the relative positioning of the stems and leaves, and the choice of sign function for each sign-weighting. In any case, it is not important to the subsequent proofs.

\begin{proof} Since the supergraph is bipartite, we can ``color" its vertices with the numbers $1$ and $-1$. Let $\varphi$ be the map from the base vertices to the corresponding vertex of the supergraph. Then let $\sigma(v)$ be the function from the base vertices to $\{1, -1\}$ defined by taking the color of $\varphi(v)$. Thus $\sigma(v)$ will be the same for all vertices within a copy of the base graph, but different for vertices from neighboring copies. 

Next, for each base vertex $v$, define $s_v$ to be the function
$$s_v(v^\prime) = \begin{cases} \sigma(v^\prime) &\mbox{if } v^\prime \cong v \\ 0 &\mbox{otherwise.} \end{cases}$$
In more compact form, $s_v = \sum_{v^\prime \cong v} \sigma(v^\prime) e_{v^\prime}$, where $e_{v^\prime}$ denotes the elementary basis vector corresponding to $v'$. The function $s_v$ is defined on the vector space containing $v$, either $V_b$ or $V_w$ as the case may be.

Now, for each graph $\swh_i$, choose a sign function $\s_i$ and corresponding Kasteleyn matrix $\K i$. By Lemma ~\ref{lem:restriction}, we can choose the sign functions such that they agree on the base vertices. Thus, for any $i, j$ and base vertices $u, v$,
\begin{align*}
\K i_{uv} &= \begin{cases} \s_i(u, v) \w(u, v) &\mbox{if } u \sim v \\ 0 &\mbox{otherwise} \end{cases}\\
&= \begin{cases} \s_j(u, v) \w(u, v) &\mbox{if } u \sim v \\ 0 &\mbox{otherwise} \end{cases}\\
&= \K j_{uv}.
\end{align*}
If, on the other hand, $v$ is a base vertex and $x$ is a stem or leaf not equal to $p$, then
\begin{align*}
\K i_{vx} &= \begin{cases} \s_i(v, x) \s_i(v, x) &\mbox{if } v \sim x \\ 0 &\mbox{otherwise} \end{cases}\\
&= \begin{cases} 1 &\mbox{if } v \sim x \\ 0 &\mbox{otherwise} \end{cases}\\
&= \begin{cases} \s_j(v, x) \s_j(v, x) &\mbox{if } v \sim x \\ 0 &\mbox{otherwise} \end{cases}\\
&= \K j_{vx}.
\end{align*}
Thus, the matrices $\K i$ differ only in the column corresponding to vertex $p$. Let $K^\prime$ be the matrix formed by taking any of the $\K i$ and replacing the column corresponding to vertex $p$ with $s_q.$ Then, since $\K i e_p = \pm e_q,$ we have $K^\prime e_p = (\pm \K 1 \pm \K 2 \pm \cdots \pm \K k)e_p$. It follows that 
\begin{align*}
\det K^\prime &= \pm \det \K 1 \pm \det \K 2 \pm \cdots \pm \det \K k \\
&= \pm \#\swh_1 \pm \#\swh_2 \pm \cdots \pm \#\swh_k.
\end{align*}

We seek to show that $\det K^\prime = 0$. Let $S$ be the subspace of functions $f \in V_w$ such that $\sigma(x) f(x) = \sigma(y) f(y)$ whenever $x$ and $y$ are base vertices such that $x \cong y$, and $f(w) = 0$ if $w$ is a leaf. Analogously, let $T$ be the subspace of functions $f \in V_b$ such that $\sigma(x)f(x) = \sigma(y) f(y)$ whenever $x$ and $y$ are base vertices such that $x \cong y$, and $f(w) = 0$ if $w$ is a stem. Thus, in a sense, we may view $S$ and $T$ as the subspaces of $V_w$ and $V_b$ which respect the equivalence relation on the base vertices. We claim that $K^\prime S \subseteq T$. 

Let $f \in S$ be a function on the white vertices. Then, for any black base vertex $x$,
\begin{align*}
\sigma(x)K^\prime f(x) &= \sigma(x) \displaystyle\sum_{z \sim x} f(z) \s(z, x) \w(z, x) \\
&= \sum_{z \sim x} \sigma(z)f(z) \s(z, x) \w(z, x),
\end{align*}
since all vertices $z$ occuring in the sum must be base vertices, and consequently lie on the same copy of the base graph as $x$. (The terms corresponding to leaves disappear because $f$ assumes the value zero there.) Consider any other base vertex $y$ such that $x \cong y$. For every base vertex $z$ adjacent to $x$, there exists a corresponding base vertex $z^\prime \sim z$ neighboring $y$. Then $\s(z, x) = \s(z^\prime, y)$ and $\w(z, x) = \w(z^\prime, y)$, and by the condition that $f \in S$, we also have $\sigma(z)f(z) = \sigma(z^\prime)f(z^\prime).$ Therefore $\sigma(x)K^\prime f(x) = \sigma(y)K^\prime f(y)$ for any two base vertices $x$ and $y$ satisfying $x \cong y$.

Now let $w$ be a stem with neighbors $z_0$ and $z_0^\prime$. Then we have 
\begin{align*}
K^\prime f(w) &= \displaystyle\sum_{z \sim w} f(z) \s(z, w) \w(z, w) \\
&= f(z_0) \s(z_0, w) \w(z_0, w) + f(z_0^\prime) \s(z_0^\prime, w) \w(z_0^\prime, w).
\end{align*}
Because of the sign-weighting procedure, we know that $$\s(z_0, w) \w(z_0, w) = \s(z_0^\prime, w) \w(z_0^\prime, w) = 1.$$ Since $z_0$ and $z_0^\prime$ are on adjacent copies of the base graph, $f(z_0) = -f(z_0^\prime).$ So $K^\prime f(w) = 0$ for all stems $w$. Thus $K^\prime S \subseteq T$. 

Now, consider the subspace $S^\prime$ formed by taking the direct sum of $S$ and $e_p$. Since $K^\prime e_p = s_q \in T$, we have that $K^\prime S^\prime \subseteq T$. Note that the dimension of $S^\prime$ is $N + 1$, where $N$ is the number of white vertices, having as its basis the set of vectors $s_v$ (for white base vertices $v$) and $e_p$. The dimension of $T$ is $N$. It follows that $K^\prime$ is singular, and thus $\det K^\prime = 0$ as desired.
\end{proof}

\begin{theorem}[Divisibility Theorem]
Let $\swh$ be a sign-weighted compound graph and let $G$ be its base graph. Then
$$\#G \mid \#\swh.$$
\end{theorem}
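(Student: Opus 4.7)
The plan is to mimic the linear-algebraic setup of the zero-sum lemma in order to produce a block-triangular decomposition of the Kasteleyn operator $K$ of $\swh$ whose determinant manifestly factors as $\pm\#G$ times an element of $R$. Define $S\subseteq V_w$ to be the subspace of functions $f$ with $\sigma(x)f(x)=\sigma(y)f(y)$ for all equivalent base-white vertices $x\cong y$ and $f(\ell)=0$ for every leaf $\ell$; define $T\subseteq V_b$ analogously, with $f$ vanishing on every stem. Since the base graph has equal numbers of black and white vertices, $\dim S = \dim T$ equals the number of white vertices of $G$.

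First I would verify that $K(S)\subseteq T$. For a base-black vertex $x$, the calculation mirrors the one in the zero-sum lemma proof: the equivalence-respecting sign function (Lemma~\ref{lem:restriction}) together with the equivalence-invariance of base-edge weights makes $\sigma(x)(Kf)(x)$ depend only on the equivalence class of $x$. For a stem $w$ with white neighbors $z_0\cong z_0'$, sign-weighting makes both Kasteleyn entries equal to $1$, while $\sigma(z_0)=-\sigma(z_0')$ forces $f(z_0)=-f(z_0')$, giving $(Kf)(w)=0$. Next I would identify $K|_S$ explicitly in the natural bases $s_v=\sum_{v'\cong v}\sigma(v')e_{v'}$ of $S$ (indexed by white $v\in G$) and $t_u=\sum_{u'\cong u}\sigma(u')e_{u'}$ of $T$: a direct expansion of $Ks_v$ shows that the stem contributions cancel by the computation above, while the base contributions collect to yield $Ks_v = \sum_{u\sim v\text{ in }G}\s_G(u,v)\w(u,v)\,t_u$. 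Thus the matrix of $K|_S$ in these bases is precisely the base Kasteleyn matrix $K_G$, and $\det(K|_S)=\pm\#G$.

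To finish, extend $\{s_v\}$ to a basis of $V_w$ by adjoining the elementary vectors $\{e_{v^{(i)}}:i\ge 2\}\cup\{e_\ell:\ell\text{ a leaf}\}$, and extend $\{t_u\}$ to a basis of $V_b$ analogously with stems replacing leaves. On each equivalence class of base whites the change-of-basis matrix is lower triangular with diagonal entries in $\{\pm 1\}$, so the full change-of-basis matrix is unimodular over $\mathbb{Z}$; consequently the matrix of $K$ in the new bases still has entries in $R$. Because $K(S)\subseteq T$, this matrix is block-triangular of the form $\begin{pmatrix}K_1 & * \\ 0 & K_3\end{pmatrix}$, where $K_1 = K|_S$. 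Hence $\#\swh = \pm\det K = \pm\#G\cdot\det K_3$ with $\det K_3\in R$, which gives $\#G\mid\#\swh$.

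The main obstacle is Step~3: the explicit identification of $K|_S$ with $K_G$. It requires carefully tracking the interaction between the $\sigma$-labels on copies, the equivalence-respecting sign function, and the sign-weighting of the stem edges, so that the two stem contributions to $Ks_v$ exactly cancel while the base contributions collect into a single copy of $K_G$. Once this identification is in hand, the block-matrix algebra and the divisibility conclusion are routine.
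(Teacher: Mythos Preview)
Your approach is correct and genuinely different from the paper's. The paper proves the Divisibility Theorem by \emph{induction on the number of leaves attached to a fixed copy of $G$}, using the zero-sum lemma at each step: in the base case (no leaves on that copy) every stem is forced outward and $\#G$ visibly divides $\#\swh$; in the inductive step, moving a leaf to siblings reduces the leaf count on that copy, and the zero-sum identity transfers divisibility. You instead bypass the zero-sum lemma entirely and give a direct block-triangularization of the Kasteleyn operator: you show $K(S)\subseteq T$, identify $K|_S$ with $K_G$ in the $\{s_v\},\{t_u\}$ bases, and extend by unimodular change of basis to obtain $\det K=\pm\det K_G\cdot\det K_3$ with $K_3$ over $R$. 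The key computation (stem contributions to $Ks_v$ cancel, base contributions reproduce $K_G$) is exactly right and uses the same ingredients as the paper's proof of the zero-sum lemma. Your argument yields an explicit cofactor $\det K_3\in R$, which the inductive proof does not; the paper's route, on the other hand, isolates the zero-sum lemma as a combinatorial identity of independent interest (and the bridge to Ciucu's theorem). The paper in fact acknowledges your alternative at the end of Section~\ref{sec:div}: ``the proof of the divisibility theorem can in fact be rephrased directly in terms of linear algebra, using a sequence of column operations,'' which is precisely your block-triangularization stated in column-operation language.
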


\begin{proof}
The theorem follows from the zero-sum lemma by induction. Pick any copy $G$ of the base graph. We will induct on the number of leaves attached to $G$.

The base case is that there are no leaves. Then, in any matching all of the stems are forced to match outwards, or else there would be an unequal number of black and white vertices remaining inside $G$. So in this case the number of matchings is divisible by $\#G$. 

Now for the inductive step, suppose $k$ leaves are attached to $G$. Pick any of these leaves; this leaf defines a family of compound graphs. In every sibling, this leaf is attached to a different copy of the base graph, leaving only $k-1$ leaves attached to $G$. Thus, by the inductive hypothesis, the number of matchings of every sibling is divisible by $\#G$. It follows from the lemma that $\#\swh$ is divisible by $\#G$ as well. This completes the proof.
\end{proof}

We note that the proof of the divisibility theorem can in fact be rephrased directly in terms of linear algebra, using a sequence of column operations. However, this proof gives no more insight than the combinatorial proof presented above, so we omit it here.
\end{section}

\begin{section}{Application to Rectangles}

The primary application of the divisibility theorem is that it provides a proof of the divisibility property of rectangles. Let $R(m, n)$ denote the $m \times n$ rectangular graph, and index the rows and columns from 1 to $m$ and from 1 to $n$, respectively. Then the vertex $(i, j)$ is the vertex in the $i$th row and $j$th column, and $(1, 1)$ is the upper left corner. The edges all have weight 1.

\begin{theorem}[Divisibility Property]\label{thm:rectangles}
If $a + 1 \mid A + 1$ and $b + 1 \mid B + 1$, then
$$\#R(a, b) \mid \#R(A, B).$$
\end{theorem}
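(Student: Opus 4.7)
The plan is to decompose $\#R(A,B)$ into matching counts of sign-weighted compound graphs with base $R(a,b)$, and apply the Divisibility Theorem termwise.

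Write $A+1 = (a+1)p$ and $B+1 = (b+1)q$. Then $R(A,B)$ decomposes naturally into a $p \times q$ grid of $a \times b$ sub-blocks, interleaved with $p-1$ single-row ``separator rows'' and $q-1$ single-column ``separator columns.'' I would treat the sub-blocks as the base copies of a compound graph, placing them with alternating horizontal and vertical reflections dictated by the bipartite $2$-colouring of the supergraph (itself the $p \times q$ grid), so that adjacent sub-blocks present equivalent boundary vertices to one another---which is precisely the condition required for stems to be admissible.

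For each perfect matching $M$ of $R(A,B)$, each separator vertex $s$ is matched either to a neighbor in an adjacent base copy or to a neighboring separator vertex. Grouping matchings by this ``separator configuration'' $\pi(M)$, I would, for each $\pi$, construct a sign-weighted compound graph $\swh_\pi$ with base $R(a,b)$: separator vertices paired to one another in $\pi$ are deleted (their matching edge is forced), while each remaining separator vertex is retained as a stem (if black, with its two adjacent base-copy neighbors as endpoints) or as a leaf (if white, attached to the base-copy neighbor specified by $\pi$). The matching count $\#\swh_\pi$ encodes, up to a global sign, the number of matchings of $R(A,B)$ in class $\pi$, giving an identity
\[\#R(A,B) = \sum_\pi \pm\, \#\swh_\pi.\]
Applying the Divisibility Theorem to each term yields $\#R(a,b) \mid \#\swh_\pi$, and hence $\#R(a,b) \mid \#R(A,B)$.

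The main obstacle is making this decomposition precise. Three things have to be checked: first, that each $\swh_\pi$ is a genuine sign-weighted compound graph---the stem/leaf balance follows from the alternating colouring of separator vertices (adjacent separator vertices always have opposite colour, so paired removals preserve colour balance), planarity is inherited from the ambient embedding of $R(A,B)$, and the equivalence of stem endpoints across adjacent copies is exactly what the alternating-reflections placement ensures; second, that the signed sum on the right-hand side really recovers $\#R(A,B)$ on the left, which is a matter of carefully tracking the signs introduced by the sign-weighting procedure in each class; and third, that configurations which cannot arise from any matching of $R(A,B)$ (for instance, those with stem/leaf imbalance) contribute zero to the sum. Of these, the sign-tracking bookkeeping is where the real delicacy lies, but once it is carried out, the Divisibility Theorem does the rest.
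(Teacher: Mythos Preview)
Your decomposition is the same one the paper uses: partition $R(A,B)$ into $a\times b$ sub-blocks and separator rows/columns, group matchings by how the separator vertices pair off, and realize each class as a compound graph with base $R(a,b)$. The paper also turns remaining black separator vertices into stems and white ones into leaves (after first fixing the direction of each white separator vertex). So the skeleton is right.

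The gap is in the sentence ``the matching count $\#\swh_\pi$ encodes, up to a global sign, the number of matchings of $R(A,B)$ in class $\pi$.'' Sign-weighting multiplies each stem/leaf edge by its value under the sign function. For a leaf this is a harmless global factor, but a stem has \emph{two} incident edges, and a matching uses exactly one of them; if the sign function assigns opposite signs to the two edges of a stem, then $\#\swh_\pi$ is a genuinely signed sum over matchings in the class, not $\pm\#H_\pi$. In that case your displayed identity $\#R(A,B)=\sum_\pi \pm\#\swh_\pi$ simply fails: the right-hand side is not a signed reorganization of the matchings of $R(A,B)$ at all. This is not bookkeeping---it is the substantive obstacle.

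The paper resolves this with Lemma~\ref{lem:h=swh}: for the compound graphs arising here, every interior face of the reduced graph contains an odd number of leaves, which forces the constant function $1$ to be a valid sign function on the reduced graph. Hence all stem and leaf edges carry sign $+1$, sign-weighting does nothing, and one gets the clean identity $\#R(A,B)=\sum_{S}\sum_{D}\#\Rd$ with no signs whatsoever. Proving the odd-leaf property (by induction on the number of separator--separator dominoes removed) is the missing ingredient in your outline.
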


Note that if either $a$ and $b$ are both odd, or $A$ and $B$ are both odd, then the theorem follows trivially because a rectangle of odd area has no matchings. Therefore we may assume that both $ab$ and $AB$ are even, which implies that both $R(a, b)$ and $R(A, B)$ are in $\G$.

We first present the following lemma.

\begin{lemma}\label{lem:h=swh}
Let $H$ be a compound graph. If in every interior face of the reduced graph there are an odd number of leaves, then $\#H = \#\swh$, i.e., the edge-weights are unchanged by sign-weighting.
\end{lemma}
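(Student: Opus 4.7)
The strategy is to produce a sign function $\s \in S(H)$ that takes the value $+1$ on every edge attached to a stem or leaf; since sign-weighting multiplies exactly these edge weights by $\s(e)$, this forces $\#\swh = \#H$. The edges of the reduced graph $R$ are precisely those incident to the stems and leaves, so by Lemma~\ref{lem:restriction} it suffices to produce a sign function $\s_R$ on $R$ that is identically $+1$, and then pair it with any sign function on $G$, which exists by Lemma~\ref{lem:sgn exists}. Via Lemma~\ref{lem:faces+}, this reduces to checking that every interior face $F_R$ of $R$ is positive with respect to the constant function $+1$; equivalently, if the boundary edge-multiset of $F_R$ has $2m$ elements, we need $(-1)^{m-1} = 1$, i.e.\ $m$ odd.

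Next I would count $m$ for an arbitrary interior face $F_R$. Each leaf inside $F_R$ has degree $1$, so its unique edge is a bridge whose two sides both lie in $F_R$, and so it contributes $2$ to the edge count. Each stem adjacent to $F_R$ has degree $2$, and in the planar embedding its two edges share both their incident faces, so such a stem also contributes $2$ edges to the boundary of $F_R$. Letting $\ell$ be the number of stems on the boundary of $F_R$ and $L$ the number of leaves inside $F_R$, we obtain $2m = 2\ell + 2L$, so $m = \ell + L$.

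The key combinatorial step is showing that $\ell$ is always even. The idea is to project the non-leaf part of the boundary walk onto the supergraph: sending each base-copy $G_i$ appearing in the boundary walk to the supergraph vertex $\varphi(G_i)$, and each stem in the walk to the supergraph edge it subdivides, yields a closed walk of length $\ell$ in the supergraph. Since the supergraph is bipartite by definition, every closed walk in it has even length, forcing $\ell$ to be even. Combined with the hypothesis that $L$ is odd, this gives $m = \ell + L$ odd, as required.

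The one technical point I expect to be slightly delicate is the edge count when the boundary of $F_R$ is not a simple cycle---for instance, when a stem or base-copy is a cut vertex of $F_R$'s boundary, or when multiple stems on the same supergraph edge force small faces with $\ell = 2$. The bipartite closed-walk argument handles these situations uniformly, since it uses only the parity of the closed walk length in the supergraph and not the structure of the walk itself, so no case analysis should be needed.
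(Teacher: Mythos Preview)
Your proof is correct and follows essentially the same approach as the paper: both show that the constant function $+1$ is a valid sign function on the reduced graph $R$ by checking that every interior face has $2 \pmod 4$ boundary edges, using bipartiteness of the supergraph to force an even number of stems and the hypothesis to supply an odd number of leaves. Your write-up is slightly more explicit---you invoke Lemma~\ref{lem:restriction} and Lemma~\ref{lem:faces+} by name, and your closed-walk projection to the supergraph cleanly handles the case where the face boundary is not a simple cycle---whereas the paper's proof tacitly assumes the non-leaf part of each face boundary is a simple cycle; but the underlying argument is the same.
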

\begin{proof}[Proof of Lemma~\ref{lem:h=swh}]
We want to show that the function $f \equiv 1$ is a valid sign function on the reduced graph. To prove this, we must simply show that every face has 2 mod 4 edges. Now every face of the reduced graph consists of a simple cycle together with the leaves in its interior. The edges of the simple cycle can be partitioned into pairs according to which stem they are attached to. Since the supergraph is bipartite, there are an even number of stems in the cycle. Thus the cycle of the reduced graph must have 0 mod 4 edges. Because the number of leaves is odd, the leaves contribute 2 mod 4 edges, so the number of edges of the face is 2 mod 4.
\end{proof}

\begin{figure}[htb]
\centering
\begin{tikzpicture}[scale=.7]

\foreach \x in {0, 5, 10}
\foreach \y in {0, 6}
\draw[fill=black!15] (\x, \y) rectangle +(3, 4);

\draw (0, 0) grid (13, 10);

\foreach \x in {0,...,12}
\draw[white, very thick] (\x, 5) -- +(1, 0);

\foreach \x in {4, 9}
\foreach \y in {0,...,9}
\draw[white, very thick] (\x, \y) -- +(0, 1);

\draw[white, very thick] (3, 6) -- (5, 6) (10, 6) -- (10, 4);

\foreach \p in {(3, 10), (4, 8), (3, 4), (4, 2), (4, 0), (8, 9), (8, 7), (9, 3), (8, 1)}
\draw[white, very thick] \p -- +(1, 0);

\foreach \p in {(1, 5), (3, 4), (5, 4), (7, 4), (9, 5), (11, 5)}
\draw[white, very thick] \p -- +(0, 1);

\foreach \x in {0,...,13}
\foreach \y in {0,...,10} 
{
	\pgfmathsetmacro{\z}{\x+\y};
	\ifthenelse{\isodd{\z}}
	{
		\ifthenelse{\equal{(\x, \y)}{(10, 5)} \OR \equal{(\x, \y)}{(4, 5)}}
		{}{\draw[fill=black] (\x, \y) circle (2pt);}
	}
	{
		\ifthenelse{\equal{(\x, \y)}{(4, 6)} \OR \equal{(\x, \y)}{(9, 5)}}
		{}{\draw[fill=white] (\x, \y) circle (2pt);}
	}
}

\end{tikzpicture}
\caption{\label{fig:recdiv}A compound graph $\Rd$ with base graph $R(5, 4)$}
\end{figure}
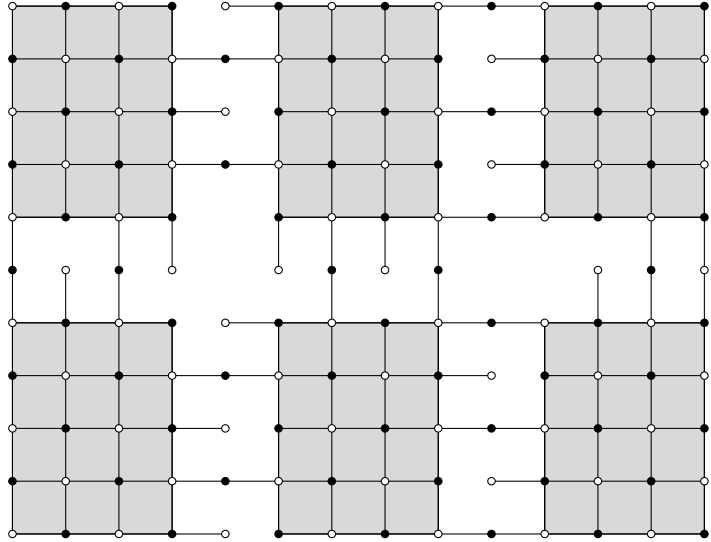

\begin{proof}[Proof of Theorem~\ref{thm:rectangles}]

We will show that the number of matchings of $R(A, B)$ is the sum of the number of matchings of many compound graphs, all of which have $R(a, b)$ as their base graph and $R(\frac{A + 1}{a + 1}, \frac{B + 1}{b + 1})$ as their supergraph. Henceforth, $R(A, B)$ will be simply written $R$. 

Let $A + 1 = k(a + 1)$ and $B + 1 = l(b + 1).$ Then $R$ can be partitioned into $kl$ copies of $R(a, b)$, where the upper left vertex of each copy is of the form $(1 + i(a + 1), 1 + j(b + 1))$, together with the remaining vertices which we call ``roots". 
Call a domino (an edge that appears in a matching) whose vertices are both roots a ``root domino". 

Given any set $S$ of pairwise disjoint root dominoes, define $R^\prime$ to be the induced graph formed from $R$ by deleting all the endpoints of dominoes in $S$, along with all edges between roots. Then the set of matchings of $R$ whose root dominoes are precisely the dominoes in $S$ is equal to the set of matchings of $R^\prime$. 

Next, note that the remaining roots have degree 2, and thus can match in one of two directions. Let $D$ be a set of edges representing a choice of direction for all of the white roots. Then, given $D$, define a new modified graph $\Rd$ as follows: for each white root, delete the edge incident to it which is not in $D$. Thus the set of matchings of $R^\prime$ whose white roots match as specified by $D$ now corresponds to the set of matchings of $\Rd$. So
\begin{align*}
\#R &= \displaystyle\sum_S R^\prime \\
&= \sum_S \sum_D \#\Rd.
\end{align*}

We now have that $\Rd$ is a compound graph. An example is pictured in Figure~\ref{fig:recdiv}. The black roots are now the stems, and the white roots are the leaves. One condition remains to be verified before we can apply the divisibility theorem: because we wish to count unweighted matchings, we need to be sure that the process of sign-weighting does not change the stem and leaf weights.

In order to apply the lemma, we must simply show that in every face of the reduced graph of $\Rd$, there are an odd number of leaves. We prove this by induction on the number of root dominoes. For any matching, every vertex of the form $(i(a + 1), j(b + 1))$ must necessarily be included by a root domino, because all four of its neighbors are roots. Thus the base case is that there are $(k - 1)(l - 1)$ root dominoes in $S$. If this is the case, it is easy to see by inspection, as pictured in Figure~\ref{fig:recdiv}, that all faces have an odd number of white leaves. The faces which lie between two copies of the base graph contain exactly one white leaf, and the faces which lie between four copies of the base graph contain either one or three white leaves, depending on whether the central vertex was black or white. (Special care must be taken in the case that $a$ or $b$ equals 2, but we leave this minor case to the reader.)

Suppose there are more than $(k - 1)(l - 1)$ dominoes in $S$. Pick one which does not have an endpoint of the form $(i(a + 1), j(b + 1))$. If we let $S^\prime$ be the set formed by removing this domino from $S$, then by the inductive hypothesis the corresponding graph $\Rd(S^\prime)$ has an odd number of leaves in all the faces of its reduced graph. Now $\Rd(S)$ is derived from $\Rd(S^\prime)$ be deleting a pair of black and white vertices. By deleting the black vertex, we make one face out of two faces which were formerly separated by a black stem. If one of these faces was the outer face, then both become the outer face and we are done. If both faces were inner faces, then by the inductive hypothesis both had an odd number of leaves, so the combined number of leaves is even. However, deleting the white vertex removes one of these leaves, leaving an odd number of leaves in the resulting face. Since the rest of the faces are left unchanged, every face of the reduced graph of $\Rd(S)$ also has an odd number of leaves, completing the induction.

Thus the graph $\Rd$ obeys the conditions of the lemma, so the edge-weights are preserved by sign-weighting. By the divisibility theorem, $\#\Rd$ is divisible by $\#R(a, b)$. Therefore, $\#R(A, B) = \sum_S \sum_D \#\Rd$ is divisible by $\#R(a, b)$, as desired.
\end{proof}
\end{section}

In a similar manner, the divisibility theorem can be applied to subgraphs of the hexagonal lattice and other lattices. However, because application of Lemma~\ref{lem:h=swh} essentially requires stems and leaves to alternate black and white, there are strong restrictions on which lines of symmetry can be chosen.

We briefly mention one other interesting numerical property of rectangles. The sequence $\#R(k, n)$, for fixed $k$, is a linearly recursive sequence. Intriguingly, if this recurrence is extended backwards to negative $n$, then $|\#R(k, -1 - n)| = |\#R(k, -1 + n)|$. Propp provides a combinatorial proof of this ``reciprocity" in \cite{propp01}. An analogous property also turns up in the matchings of other regions, such as Aztec diamonds and Aztec pillows. It is not known whether this property is somehow related to the divisibility property.

\begin{section}{Ciucu's Factorization Theorem}

The same general approach taken towards proving the divisibility theorem can be applied to provide an alternative proof of Ciucu's factorization theorem, when the graph in question is bipartite. The proof is strikingly similar, but we will be careful to point out the subtle differences. The technique of showing that a certain matrix has zero determinant is applied to prove Ciucu's lemma, which is analogous to the zero-sum lemma, and from there Ciucu's factorization theorem follows in much the same way that the divisibility theorem follows from the zero-sum lemma. To clarify, by ``Ciucu's lemma" we refer to Lemma 1.1 of \cite{ciucu97}, whereas by ``Ciucu's factorization theorem" we refer to Theorem 1.2 of the same paper. Both are restated below for the reader's convenience in the bipartite case.

Using the terminology defined in Section 2, Ciucu's lemma essentially involves compound graphs whose supergraph is $P_2$. Let $G_1$ and $G_2$ be the two copies of the base graph, where $G_2$ is the reflection of $G_1$ across a line $\ell$ exterior to both $G_1$ and $G_2$. Then connect $G_1$ and $G_2$ with $w$ stems and $w$ leaves, lying on line $\ell$, where $w$ can be any positive integer. When read along line $\ell$, the vertices must alternate stems and leaves. Moreover, the total number of black and white vertices of the resulting graph $H$ must be equal, so that $H \in \G$. However, the zero-sum lemma cannot be applied because the stems may not necessarily be black, the leaves may not necessarily be white, and $G$ may not necessarily belong to $\G$. An example of such a compound graph is shown in Figure~\ref{fig:ciucu}.

We will, as before, consider weighted matchings of $H$, where the weighting respects the equivalence relation on the base edges. However, there will be no sign-weighting; the weights on all the stems and leaves will remain 1. We will also define a family of compound graphs analogously; in this case, each famiy will contain two graphs.

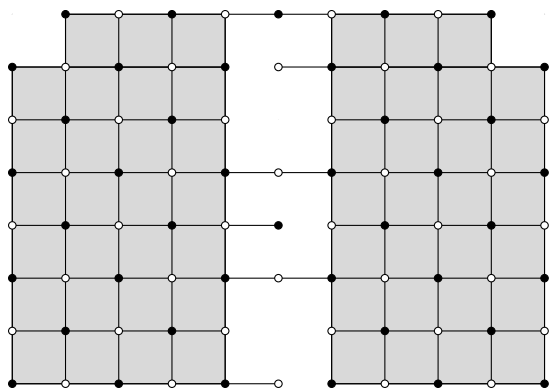
\begin{figure}[htb]
\centering
\begin{tikzpicture}[scale=.7]

\draw[fill=black!15] (0, 0) rectangle +(4, 6);
\draw[fill=black!15] (6, 0) rectangle +(4, 6);
\draw[fill=black!15] (1, 6) rectangle +(3, 1);
\draw[fill=black!15] (6, 6) rectangle +(3, 1);

\draw (0, 0) grid (10, 7);

\foreach \p in {(0, 7), (9, 7), (4, 6), (4, 5), (5, 5), (5, 3), (4, 1), (5, 1), (5, 0)}
\draw[white, very thick] \p -- +(1, 0);

\foreach \p in {(0, 6), (10, 6), (5, 0), (5, 1), (5, 2), (5, 3), (5, 4), (5, 5), (5, 6)}
\draw[white, very thick] \p -- +(0, 1);

\foreach \x in {0,...,10}
\foreach \y in {0,...,7} 
{
	\pgfmathsetmacro{\z}{\x+\y};
	\ifthenelse{\isodd{\z}}
	{
		\ifthenelse{\equal{(\x, \y)}{(0, 7)} \OR \equal{(\x, \y)}{(10, 7)}}
		{}{\draw[fill=white] (\x, \y) circle (2pt);}
	}
	{
		\ifthenelse{\equal{(\x, \y)}{(5, 1)} \OR \equal{(\x, \y)}{(5, 5)}}
		{}{\draw[fill=black] (\x, \y) circle (2pt);}
	}
}

\end{tikzpicture}
\caption{\label{fig:ciucu}A compound graph with both black and white stems and leaves}
\end{figure}

\begin{lemma}[Ciucu's Lemma] \label{lem:ciucu}
Let $H_1$ and $H_2$ be the two members of a family. Then
$$\#H_1 - \#H_2 = 0.$$
\end{lemma}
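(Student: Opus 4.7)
The plan is to mirror the proof of the zero-sum lemma: construct a single matrix $K'$ whose determinant is, up to sign, $\#H_1 - \#H_2$, and then show $\det K' = 0$ via a subspace and kernel argument.

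By Lemma~\ref{lem:restriction}, I would pick sign functions on $H_1$ and $H_2$ that agree on every base edge, and moreover choose both to restrict to the identically-$+1$ sign function on the reduced graph $R$. This choice is legitimate: in Ciucu's setup each interior face of $R$ is bounded by two stems (contributing four edges of $R$, since each stem becomes a path of length two after identification) and contains a single doubled leaf in its interior (contributing two more), for six edges in total, which is $2 \pmod 4$ as required by Lemma~\ref{lem:faces+}. Consequently every edge of $H_i$ incident to a stem or leaf carries sign $+1$, and the Kasteleyn matrices $K^{(1)}$ and $K^{(2)}$ agree except in the single column indexed by the movable leaf $p$; assuming without loss of generality that $p$ is white and attaches to equivalent black vertices $q_1 \in G_1$, $q_2 \in G_2$, the $p$-column is $e_{q_1}$ for $K^{(1)}$ and $e_{q_2}$ for $K^{(2)}$. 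Define $K'$ by replacing the $p$-column with $s_q := e_{q_1} - e_{q_2}$; multilinearity of the determinant in this column then gives
\[
\det K' = \det K^{(1)} - \det K^{(2)} = \pm(\#H_1 - \#H_2),
\]
so the lemma will follow once we establish $\det K' = 0$.

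The subspace argument from the zero-sum lemma now transplants: let $S \subseteq V_w$ consist of functions $f$ with $f(z_1) = -f(z_2)$ for equivalent white base vertices and $f(v) = 0$ for every white stem and every white leaf, set $S' = S \oplus \langle e_p\rangle$, and define $T \subseteq V_b$ analogously. The verification that $K'S' \subseteq T$ proceeds exactly as in the zero-sum case on base vertices and on black stems, using our sign-function choice $\sigma(z_1, w) = \sigma(z_2, w) = +1$ on each stem together with the antisymmetry $f(z_1) = -f(z_2)$, and the identity $K'e_p = s_q \in T$ is immediate.

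The main obstacle, and the place where the argument genuinely departs from the zero-sum lemma, is handling leaves of both colors — particularly black leaves. In the zero-sum case no black leaves exist, so one may freely impose $g(l) = 0$ on $T$ at leaves; here $K'f(l) = f(z_l)\sigma(z_l, l)$ for $f \in S$ is nonzero in general, forcing $T$ to accommodate nontrivial black-leaf values. The resolution I envision is a more refined $T$ in which the black-leaf coordinates are not free but are constrained by linear functionals of $g$'s base-vertex values, determined through the base-graph Kasteleyn action so as to match exactly what $K'f$ produces at each black leaf. After this adjustment, $\dim T$ remains equal to $|G^w|$ just as in the balanced zero-sum case, and the strict inequality $\dim S' = |G^w| + 1 > \dim T$ yields a nontrivial kernel vector of $K'$, giving $\det K' = 0$ as required.
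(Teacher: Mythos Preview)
Your high-level plan matches the paper's proof, but the execution fails at a key point, and the ``obstacle'' you identify with black leaves is a symptom of that same failure rather than a separate issue.

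The error is the sign function. You invoke Lemma~\ref{lem:restriction} to lift the constant function $+1$ on $R$ to a sign function on $H$, but that lemma was proved under the standing hypothesis that all stems are black; its final parity step (``since all the $q_i$ are white \ldots\ the difference between the number of edges in $F_H$ and $F_R$ is divisible by $4$'') is exactly what breaks when some stems are white. Concretely, if a face $F_H$ is bounded by one black and one white stem, the two boundary arcs along $G_1$ and $G_2$ have odd length, so $|F_H|\equiv 0 \pmod 4$ and positivity requires $\prod_{e\in F_H}\s e=-1$, whereas your choice gives $+1$. The paper therefore does \emph{not} use Lemma~\ref{lem:restriction} here; it proves a separate sign-function lemma (Lemma~\ref{lem:ciucu sgn}) for the Ciucu setting, in which the two edges at each white stem carry \emph{opposite} signs and the two edges at each black stem carry equal signs.

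That asymmetry is precisely what dissolves your black-leaf obstacle. With it in hand, the paper defines $S$ with white-stem values \emph{free} (not zero) and $T$ with black-leaf values free (no ``refined'' linear functionals needed). For a black base vertex $x$ adjacent to a white stem $w$, the equivalent vertex $y$ is also adjacent to $w$, and the opposite signs $\s(w,x)=-\s(w,y)$ make the stem's contribution to $K'f$ correctly antisymmetric even when $f(w)\neq 0$; at a black leaf there is nothing to check since $T$ imposes no constraint there. The dimension count then reads $\dim S = N_w + s_w$ and $\dim T = N_b + l_b$ (with $N_w,N_b$ the white and black vertex counts of $G$ and $s_w,l_b$ the numbers of white stems and black leaves), and these coincide because $H\in\G$ together with $\#\text{stems}=\#\text{leaves}$ forces $N_w+s_w=N_b+l_b$. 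Your alternative---zeroing white stems in $S$ and pinning black leaves in $T$---would give $\dim S=N_w$ and $\dim T=N_b$; since in Ciucu's setting $G$ need not lie in $\G$, one cannot conclude $N_w\ge N_b$, and the kernel argument collapses.
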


As before, we must first prove a minor lemma on sign functions.

\begin{lemma}
There exists a sign function $\s$ on $H$ that respects the equivalence relation and additionally satisfies the following property: For all stems $p$, if $p$ is incident to edges $e_1$ and $e_2$, then $\s e_1 = \s e_2$ if $p$ is black and $\s e_1 = -\s e_2$ if $p$ is white.
\label{lem:ciucu sgn}
\end{lemma}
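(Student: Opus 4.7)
The plan is to construct the sign function $\s$ on $H$ directly rather than attempting to invoke Lemma~\ref{lem:restriction} (whose proof uses crucially that stems are black). First I would apply Lemma~\ref{lem:sgn exists} to the base graph $G$ to obtain a sign function $\s_G$, and extend it to the base edges of $H$ by assigning each base edge of $G_1$ or $G_2$ the value of $\s_G$ on the corresponding edge of $G$. This extension respects the equivalence relation on base edges by construction and makes every face of $H$ that is a copy of a face of $G$ positive.

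Next, I would define $\s$ on the stem and leaf edges in a way that enforces the desired stem condition. For each stem $p$ with incident edges $e_1, e_2$, set $\s e_1 = 1$ and $\s e_2 = \epsilon_p$, where $\epsilon_p = +1$ if $p$ is black and $\epsilon_p = -1$ if $p$ is white. On each leaf edge, assign $\s$ arbitrarily (say, to $1$); the choice is immaterial because each leaf edge appears with multiplicity two in every face boundary, contributing $\s(e)^2 = 1$ to every face product.

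The main task is then to verify that every face of $H$ in the bridge region between $G_1$ and $G_2$ is positive; by Lemma~\ref{lem:faces+}, this will imply $\s$ is a sign function. Such a face $F$ is bounded by two consecutive stems $p, q$ along line $\ell$, their stem edges, and two paths along the outer faces of $G_1$ and $G_2$ between the corresponding attachment points, with possibly some leaves inside. I would first compute the product of $\s$ over the edges of $F$: the stems contribute $\epsilon_p \epsilon_q$ by the stem condition, the leaves contribute squares equal to $1$, and the two base-graph paths are equivalent as sequences of edges (by the reflection through $\ell$), so their joint contribution is a square and equals $1$; thus the face product is $\epsilon_p \epsilon_q$. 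Then I would count edges: if $F$ has $2m$ edges with multiplicity, then $m = 2 + |P| + n_\ell$, where $|P|$ is the common length of the two outer-face paths and $n_\ell$ is the number of leaves in $F$. Since the stem attachments $v_p, v_q$ on $G$ have colors opposite to $p, q$, the path length $|P|$ is even if and only if $\epsilon_p = \epsilon_q$; equivalently $(-1)^{|P|} = \epsilon_p \epsilon_q$. Hence $(-1)^{m-1} = -\epsilon_p\epsilon_q \cdot (-1)^{n_\ell}$, and matching this to the face product reduces positivity to $n_\ell$ being odd. The alternation of stems and leaves along $\ell$ guarantees $n_\ell = 1$ for every interior face of the bridge region, so this condition holds automatically.

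The main obstacle is the parity bookkeeping above, especially the claim $(-1)^{|P|} = \epsilon_p\epsilon_q$, which relies on the bipartiteness of $H$ and the orientation-reversing nature of the reflection. Some care is also needed near the top and bottom of line $\ell$, where the bridge-region cells at the endpoints of the stem-leaf sequence may merge with the outer face of $H$ and thus impose no positivity constraint; distinguishing these from genuine inner faces is a small but necessary check. Once the construction and parity computation are in place, the existence of $\s$ follows immediately from Lemma~\ref{lem:faces+}.
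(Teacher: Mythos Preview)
Your proposal is correct and follows essentially the same route as the paper. Both arguments pull back a sign function from $G$ to the base edges, impose the stated condition on stem edges, and then verify positivity of each bridge face by computing its sign product as $\epsilon_p\epsilon_q$ and matching this against the edge-count parity; the alternation of stems and leaves is used in exactly the same way (to force one leaf per inner bridge face). Your write-up is slightly more self-contained---you argue the path-product cancellation directly from the reflection symmetry rather than citing the machinery of Lemma~\ref{lem:restriction}, and you explicitly note that leaf edges appear with multiplicity two and that the extremal bridge cells may merge with the outer face---but these are expository refinements rather than a different strategy.
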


\begin{proof}[Proof of Lemma ~\ref{lem:ciucu sgn}]
By Lemma~\ref{lem:sgn exists}, there exists a sign function $f$ on the base graph $G$. Now define a sign function $\s$ on the compound graph $H$ by setting $\s$ in accordance with $f$ on the base edges, and setting $\s$ in accordance with the conditions of the lemma on the edges connected to stems. 

Because of the condition that the stems and leaves must alternate along line $\ell$, each face of the reduced graph $R$ is 6-sided. Consider a face $F_R$ of the reduced graph and corresponding face $F_H$ of $H$. Suppose that $F_H$ is surrounded by the two stems $p$ and $p^\prime$. By the same logic as in the proof of Lemma~\ref{lem:restriction}, we have
\begin{align*}
\prod_{e \in F_H} \s e &= \prod_{e \in F_R} \s e \\
&= \begin{cases} 1 &\mbox{if } p \text{ and } p^\prime \text{ are the same color,} \\ -1 &\mbox{if } p \text{ and } p^\prime \text{ are different colors.} \end{cases}
\end{align*}

Now every face of the reduced graph has 6 edges. The additional edges added in $F_H$ are the two paths connecting the neighbors of $p$ and $p^\prime$. If $p$ and $p^\prime$ are the same color, each of these paths has even length, contributing 0 mod 4 edges; otherwise, each path has odd length, contributing 2 mod 4 edges. In either case $F_H$ is positive. Since every face of the base graph is positive as well, $\s$ is a valid sign function.
\end{proof}

\begin{proof}[Proof of Ciucu's lemma]
We can assume, without loss of generality, that the leaf $p$ defining the family is white; the other case follows by symmetry. Let $K_1$ and $K_2$ be the Kasteleyn operators of $H_1$ and $H_2$, respectively. Because leaf $p$ remains in the same face in both graphs, the same sign function is valid for both $H_1$ and $H_2$ (and thus there is no need for sign-weighting). Thus, $K_1$ and $K_2$ are identical in all columns except column $p$. Let $K^\prime$ be the matrix formed by taking $K_1$ or $K_2$ and replacing the column corresponding to vertex $p$ with $s_q$, as defined in Lemma ~\ref{lem:zerosum}, where $q$ is the vertex adjacent to $p$. Then, as before, $\det K^\prime = \pm \det K_1 \pm \det K_2$. We wish to show that $\det K^\prime = 0$. 

Define $S$ to be the subspace of functions $f \in V_w$ such that $\sigma(x) f(x) = \sigma(y) f(y)$ whenever $x$ and $y$ are base vertices such that $x \cong y$, where $\sigma$ is defined as before, and $f(w) = 0$ whenever $w$ is a leaf (with no restrictions if $w$ is a stem). Likewise, define $T$ to be the subspace of functions $f \in V_b$ such that $\sigma(x) f(x) = \sigma(y) f(y)$ whenever $x$ and $y$ are base vertices such that $x \cong y$, and $f(w) = 0$ whenever $w$ is a stem (with no restrictions if $w$ is a leaf). We claim that $K^\prime S \subseteq T$. 

Now, if $x$ is a black base vertex that is not adjacent to a white stem, then for every function $f \in S$,
$$\sigma(x) K^\prime f(x) = \sigma(x) \displaystyle\sum_{z \sim x} f(z) \s(z, x) \w(z, x).$$
Then, for all $y \cong x$, we have $\sigma(x) K^\prime f(x) = \sigma(y) K^\prime f(y)$ by the same argument given in the proof of the zero-sum lemma.

If $x$ is a black base vertex adjacent to a white stem $w$, then 
$$\sigma(x) K^\prime f(x) = \sigma(x) f(w) \s(w, x) \w(w, x) + \sigma(x) \displaystyle\sum_{z \in B} f(z) \s(z, x) \w(z, x),$$
where $B$ is the set of base vertices adjacent to $x$. Let $y$ be another base vertex such that $y \cong x,$ and let $B^\prime$ be the set of base vertices adjacent to $y$. Since there are only two copies of the base graph, $y$ must also be adjacent to $w$. So
\begin{align*}
\sigma(y) K^\prime f(y) &= \sigma(y) f(w) \s(w, y) \w(w, y) + \sigma(y) \sum_{z^\prime \in B^\prime} f(z^\prime) \s(z^\prime, y) \w(z^\prime, y) \\
&= - \sigma(y) f(w) \s(w, x) \w(w, x) +\sum_{z^\prime \in B^\prime} \sigma(z^\prime)f(z^\prime) \s(z^\prime, y) \w(z^\prime, y) \\
&= \sigma(x) f(w) \s(w, x) \w(w, x) + \displaystyle\sum_{z \in B} \sigma(z) f(z) \s(z, x) \w(z, x) \\
&= \sigma(x) K^\prime f(x),
\end{align*}
and thus the condition still holds.

Finally, if $w$ is a black stem, then $w$ is adjacent to two white vertices $z_1$ and $z_2$, and since $z_1 \cong z_2$, 
\begin{align*}
K^\prime f(w) &= \displaystyle\sum_{z \sim w} f(z) \s(z, w) \w(z, w) \\
&= f(z_1) \s(z_1, w) \w(z_1, w) + f(z_2) \s(z_2, w) \w(z_2, w) \\
&= f(z_1) \s(z_1, w) \w(z_1, w) - f(z_1) \s(z_1, w) \w(z_1, w) \\
&= 0.
\end{align*}

This concludes the proof that $K^\prime S \subseteq T$. Since $K^\prime e_p = s_q \in T$ as well, $K^\prime$ takes the $(N + 1)$-dimensional space $S \oplus e_p$ to the $N$-dimensional space $T$, showing that $K^\prime$ is singular. Thus $\#H_1 \pm \#H_2 = 0$. Now the sign of the operator $\pm$ is independent of the weighting, so by choosing a positive weighting we can deduce that we must have $\#H_1 - \#H_2 = 0,$ as desired. 
\end{proof}

Ciucu's lemma can in fact be slightly generalized by allowing sign-weighting, so that the stems and leaves need not necessarily alternate. We have omitted this generalization, however, as it does not appear to offer much new insight. 

Ciucu's theorem uses Ciucu's lemma to count matchings in the same way that we used the zero-sum lemma to count matchings of rectangles. The setup is similar to before, except there are no leaves. Let $G_1$ and $G_2$ be two copies of a planar bipartite graph $G$, where $G_2$ is the reflection of $G_1$ across a line $\ell$ exterior to both $G_1$ and $G_2$. Construct $2w$ stems on $\ell$, such that the resulting graph $H$ (consisting of $G_1$ and $G_2$ together with these $2w$ vertices) has an equal number of black and white vertices. 

\begin{theorem}[Ciucu's theorem]
Let the stems of $H$ be labeled ``even" or ``odd," alternatingly. Define $G_{bw}$ to be the induced subgraph of $G$ formed by deleting from $G$ every vertex adjacent to a black even vertex or a white odd vertex in $H$, and define $G_{wb}$ to be the induced subgraph formed by deleting every vertex adjacent to a white even vertex or a black odd vertex in $H$. Then $$\#H = 2^w \#G_{bw} \#G_{wb}.$$
\end{theorem}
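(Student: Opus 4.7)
The plan is to mirror the proof of the divisibility theorem, with Ciucu's lemma (Lemma~\ref{lem:ciucu}) playing the role of the zero-sum lemma. To set up notation, label the stems of $H$ along $\ell$ as $s_1, \ldots, s_{2w}$, write $v_i$ and $v_i'$ for the $G_1$- and $G_2$-neighbors of $s_i$, and let $O, E \subseteq [2w]$ index the odd- and even-position stems. Put $I_{bw} = \{i : s_i \text{ is black even or white odd}\}$, so that $G_{bw} = G \setminus \{v_i : i \in I_{bw}\}$, and $I_{wb} = [2w] \setminus I_{bw}$.

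First I would enumerate matchings of $H$ by recording which of its two neighbors each stem matches to. Summing over $U \subseteq [2w]$ (the indices of stems matched into $G_1$) and using reflective symmetry to identify matching counts in $G_1$ and $G_2$ gives
\[
\#H = \sum_{U \subseteq [2w]} \#(G \setminus V_U) \cdot \#(G \setminus V_{U^c}),
\]
where $V_S := \{v_i : i \in S\}$ is viewed as a subset of vertices of the abstract base graph $G$. Grouping this sum by $U_O := U \cap O$, the inner sum over $U_E := U \cap E$ is exactly the matching count of a compound graph $\hat H(U_O)$ obtained from $H$ by replacing each odd-position stem with a leaf, placed at $v_i$ in $G_1$ for $i \in U_O$ and at $v_i'$ in $G_2$ for $i \in O \setminus U_O$. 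Each $\hat H(U_O)$ has $w$ stems and $w$ leaves alternating along $\ell$, which is the setting of Ciucu's lemma.

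Next, I would iterate Ciucu's lemma to move each odd-position leaf between its two equivalent positions $v_i \leftrightarrow v_i'$; any two $\hat H(U_O)$ and $\hat H(U_O')$ are connected by such moves, so they share a common matching count. Summing over the $2^w$ subsets $U_O \subseteq O$ then yields $\#H = 2^w \cdot \#\hat H(U_O^{\ast})$ for any fixed $U_O^{\ast}$.

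Finally, I would take $U_O^{\ast} = O \cap I_{bw}$. Then $\#\hat H(O \cap I_{bw})$ expands as
\[
\sum_{U_E \subseteq E} \#(G \setminus V_{(O \cap I_{bw}) \cup U_E}) \cdot \#(G \setminus V_{(O \cap I_{wb}) \cup (E \setminus U_E)}),
\]
and a color-balance argument — using the $H$-balance relation together with the parity alternation of stems along $\ell$ — shows that the first factor vanishes unless $|U_E \cap I_{bw}| - |U_E \cap I_{wb}| = |E \cap I_{bw}|$. Since the left-hand side is bounded above by $|E \cap I_{bw}|$ with equality only when $U_E = E \cap I_{bw}$, the sum collapses to the single surviving term $\#G_{bw} \cdot \#G_{wb}$, yielding $\#H = 2^w \#G_{bw} \#G_{wb}$. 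The main obstacle will be this last color-balance calculation — tracking how the colors of the removed vertices depend on the positions and colors of the stems — which parallels the role played by alternation in the proof of Lemma~\ref{lem:h=swh}.
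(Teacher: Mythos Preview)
Your proposal is correct and follows precisely the route the paper indicates: deduce the factorization from Ciucu's lemma in the same way the divisibility theorem was deduced from the zero-sum lemma. The paper itself does not spell out the argument, deferring instead to Ciucu's original paper, so you are in fact supplying more detail than the text does.

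The one place where your write-up diverges slightly from Ciucu's original is the final step. Ciucu finishes by a sequential forcing argument along the axis (once the odd-position leaves have been placed, each even stem is forced in turn), whereas you collapse the sum over $U_E$ in a single stroke via color balance. Your version works: using $|O|=|E|=w$ together with the global balance $2(n_b-n_w)=|\text{white stems}|-|\text{black stems}|$, the equation you state, $|U_E\cap I_{bw}|-|U_E\cap I_{wb}|=|E\cap I_{bw}|$, is exactly the balance condition on the first factor, and the inequality argument then pins down $U_E=E\cap I_{bw}$ as claimed. This is a clean substitute for the forcing argument and arguably more in keeping with the linear-algebraic spirit of the paper.
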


\begin{proof}
The proof of Ciucu's theorem follows from Ciucu's lemma in a very similar manner to the way the divisibility theorem follows from the zero-sum lemma. We refer the reader to Ciucu's paper \cite{ciucu97} for details. In the statement presented the vertices on $\ell$ are assumed to be independent and have degree 2; Ciucu shows that the general case is easily deduced from this case via a graph transformation.
\end{proof}

It is striking that the proof of Ciucu's lemma using linear algebra and the original proof involving 2-factors (which has the advantage of not assuming bipartiteness) seem entirely different. An interesting question to ask is whether there is a combinatorial proof of the zero-sum lemma in the style of 2-factors. Perhaps there exists a more general statement which generalizes both Ciucu's factorization theorem and the divisibility theorem.
\end{section}

\subsection*{Acknowledgements}
\thanks{I was supported by MIT's Undergraduate Research Opportunities Program in the summer of 2013, under the mentorship of Professor Henry Cohn. I would like to thank Henry Cohn for many stimulating discussions.}

\bibliographystyle{plain}

\begin{thebibliography}{99}

\bibitem{ciucu97}
M.~Ciucu. \newblock Enumeration of perfect matchings in graphs with reflective symmetry. \newblock  {\em J. Combin. Theory, Ser. A} 77(1):67--97, 1997.

\bibitem{hanusa05}
C.~Hanusa. \newblock A Gessel-Viennot-Type Method for Cycle Systems with Applications to Aztec Pillows. \newblock PhD Thesis, University of Washington, June 2005.

\bibitem{kasteleyn61}
P.~Kasteleyn. \newblock The statistics of dimers on a lattice: I. The number of dimer arrangements on a quadratic lattice. \newblock {\em Physica}, 27(12):1209--1225, 1961.

\bibitem{kasteleyn67}
P.~Kasteleyn. \newblock Graph theory and crystal physics. \newblock {\em Graph theory and theoretical physics}, 1:43--110, 1967.

\bibitem{kenyon09}
R.~Kenyon. \newblock Lectures on dimers. \newblock arXiv preprint, \arxiv{0910.3129}, 2009.

\bibitem{kuo03}
E.~Kuo. \newblock Applications of graphical condensation for enumerating matchings and tilings. \newblock {\em Theoretical Computer Science}, 319:29--57, 2004.

\bibitem{percus69}
J.~Percus. \newblock One more technique for the dimer problem. \newblock {\em Journal of Mathematical Physics}, 10:1881--1888, 1969.

\bibitem{propp99}
J.~Propp. \newblock Enumeration of matchings: problems and progress. \newblock {\em New Perspectives in Algebraic Combinatorics}, L. Billera, A. Bj\"orner, C. Greene, R. Simion, and R. P. Stanley, eds., Mathematical Sciences Research Institute Publications, Cambridge University Press, 38:255--291, 1999.

\bibitem{propp01}
J.~Propp. \newblock A reciprocity theorem for domino tilings, \newblock 
{\em Electron. J. Combin.}, 8(1):\#R18, 2001.

\bibitem{yan05}
W.~Yan and F.~Zhang. \newblock Graphical condensation for enumerating perfect matchings, \newblock {\em J. Combin. Theory, Ser. A} 110(1):113--125, 2005.

\end{thebibliography}

\end{document}